\newtheorem{theorem}{Theorem}[section]
\newtheorem{corollary}[theorem]{Corollary}
\newtheorem{lemma}[theorem]{Lemma}
\newtheorem{proposition}[theorem]{Proposition}
\theoremstyle{definition}
\newtheorem{definition}[theorem]{Definition}
\newtheorem{remark}[theorem]{Remark}
\theoremstyle{definition}
\newtheorem{fact*}{Fact}
\newcommand{\finv}[1]{{#1}^{\bm{\dagger}/2}}
\newcommand{\alg}{\mathrm{alg}}
\DeclareMathOperator\vecc{{\bf vec}}
\newcommand{\B}{\mathcal{B}}
\newcommand{\M}{\mathcal{M}}
\newcommand{\cM}{\mathcal{M}}
\newcommand{\Z}{\mathbb{Z}}
\newcommand{\cE}{\mathcal{E}}
\newcommand{\cQ}{\mathcal{Q}}
\newcommand{\cV}{\mathcal{V}}
\newcommand\BH{B(\mathcal{H})}
\newcommand{\C}{\mathbb{C}}
\newcommand{\bbx}{\mathbbm{x}}
\newcommand\Choi{\mathfrak{C}}
\newcommand{\NP}[1]{\mathrm{NP}(#1)}
\newcommand{\ANP}[1]{\mathrm{ANP}(#1)}
\newcommand{\rowt}{\mathrm{row}}
\newcommand{\colt}{\mathrm{col}}
\newcommand\ep{\varepsilon}
\newcommand{\set}[1]{{\left\{#1\right\}}}
\newcommand{\norm}[1]{{\left\Vert#1\right\Vert}}
\newcommand{\snorm}[1]{{\|#1\|}}
\newcommand{\trp}{\bm{\tau}}
\newcommand{\ran}[1]{\operatorname{ran}#1}
\newcommand{\til}{\raise.17ex\hbox{$\scriptstyle\mathtt{\sim}$}}
\newcommand{\bbm}{\left[ \begin{smallmatrix}}
\newcommand{\ebm}{\end{smallmatrix} \right]}
\newcommand{\bBm}{\left[ \begin{matrix}}
\newcommand{\eBm}{\end{matrix} \right]}
\newcommand{\bpm}{\begin{pmatrix}}
\newcommand{\epm}{\end{pmatrix}}
\newcommand{\bal}{\begin{align*}}
\newcommand{\eal}{\end{align*}}
\newcommand\smallmath[2]{#1{\raisebox{\dimexpr \fontdimen 22 \textfont 2
      - \fontdimen 22 \scriptfont 2 \relax}{$\scriptstyle #2$}}}
\newcommand{\sot}{\!\smallmath\mathbin\otimes\!}
\numberwithin{equation}{section}
\newlength{\Mheight}
\newlength{\cwidth}
\newcommand{\fralg}[1]{\langle #1 \rangle}
\newcommand{\fax}{\fralg{\bbx}}
\title[Effective nc Pick interpolation]{Effective noncommutative Nevanlinna-Pick interpolation in the row ball, and applications}
\author{Meric Augat}
\address{Washington University in St. Louis}
\email{maugat@wustl.edu}
\author{Michael T. Jury${}^*$}
\address{University of Florida}
\email{mjury@ufl.edu}
\author{James Eldred Pascoe}
\address{University of Florida}
\email{pascoej@ufl.edu}
\thanks{${}^*$Research Supported by NSF grant DMS-1900364.}
\thanks{}
\begin{document}

\begin{abstract}
We provide an effective single-matrix criterion, in terms of what we call the {\em elementary Pick matrix}, for the solvability of the noncommutative Nevanlinna-Pick interpolation problem in the row ball, and provide some applications. In particular we show that the so-called ``column-row property'' fails for the free semigroup algebras, in stark contrast to the analogous commutative case. Additional applications of the elementary Pick matrix include a local dilation theorem for matrix row contractions and interpolating sequences in the noncommutative setting. Finally we present some numerical results related to the failure of the column-row property. 
\end{abstract}

\maketitle

\section{Introduction}
\label{sec:intro}
\subsection{} The purpose of this paper is to give an effective solution of the so-called ``noncommutative Nevanlinna-Pick interpolation 
problem'' in the row ball, which is an analog, in the modern setting of noncommutative function theory, of the classical Nevanlinna-Pick 
interpolation problem.  The main result is the construction of a single matrix, in closed form, such that the problem has a solution if and 
only if this matrix is positive semidefinite. In this introductory section we pose the problem and describe some of the applications of our 
solution.


\subsection{Noncommutative Pick interpolation in the row ball} 

We work in the general setting of noncommutative function theory, as laid out e.g. in \cite{KVV-tome}. Fix an integer $d\geq 1$. For each $n=1, 2, 3, \dots, $ let $\cM_n^d$ denote the set of $d$-tuples of $n\times n$ matrices with 
complex entries:
\[
	\cM_n^d =\{X=(X_1, \dots, X_d): X_i \in \cM_n\}
\]
and let $\cM^d$ be the disjoint union of the $\cM_n^d$ over all $n$ (When $d=1$ we drop the superscripts and just write $\cM_n, 
\cM$). 
Let $\cM_{s\times t}$ denote the set of $s\times t$ matrices with complex entries.
By the {\em row ball $\mathcal B^d$} we mean the graded subset of $\cM_d$, defined at each ``level'' $n$ by
\[
\B_n^d =\{X=(X_1, \dots , X_d) \in \cM_n^d: \|X_1 X_1^*+\cdots +X_dX_d^*\|<1\} \subset \cM_n^d.
\]
The row ball $\mathcal B^d$ is a prototypical example of an {\em nc domain}; this means that
(1) at each level, the set $\B_n^d \subset \cM_n^d$ is open, (2)  $\mathcal B^d$ respects direct sums, i.e. if $X\in \mathcal B^d_n$ and $Y\in \mathcal B^d_m$ then $X\oplus Y \in \mathcal B^d_{m+n}$ 
(here the direct sum means coordinatewise direct sum: $X\oplus Y = (X_1\oplus Y_1, \dots, X_d\oplus Y_d)$; and (3) $\mathcal B^d$ respects 
unitary equivalence, i.e. if $U\in \M$ is a unitary matrix and $X=(X_1, \dots, X_d)\in \mathcal B^d_n$ then $U^*XU = (U^*X_1U, 
\dots , U^*X_dU)\in\mathcal B^d_n$.  

The nc-domain $\mathcal B^d$ then supports {\em nc-functions}, which are graded functions $f:\mathcal 
B^d \to \cM$ 
(that 
is, a family of functions $f_n:\mathcal B^d_n\to \cM_n$, $n=1, 2, 3, \dots$ which (1) respect direct sums: for $X\in \mathcal B^d_n$, 
$Y\in\mathcal B^d_m$, we have $f_{m+n}(X\oplus Y)=f_n(X)\oplus f_m(Y)$; and (2) respect similarities, in the sense that if $X\in \mathcal B^d$ and $S$ is a similarity such that $S^{-1}XS$ is also in $\mathcal B^d$, then $f(S^{-1}XS) =S^{-1}f(X)S$. Let 
\[
H^\infty(\mathcal B^d) =\{f:\mathcal B^d\to \cM: f \text{ is an nc function and } \sup_{X\in\mathcal B^d} \|f(X)\|<\infty\}.
\]
We refer to the supremum in this definition as the $H^\infty$ norm of the nc function $f$, denoted $\snorm{f}_\infty$.

The {\em noncommutative Nevanlinna-Pick interpolation problem} in the row ball is the following (see \cite{BMV18} and the references therein) : given a finite set of points (``nodes'') 
$X^1, \dots , X^m$ in $\mathcal B^d$, with $X^j\in \mathcal B^d_{n_j}$, and matrices $Y^1, \dots Y^m$, with $Y^j\in \cM_{n_j}$, find an 
interpolating function $f\in H^\infty (\mathcal B^d)$ (if it exists)
\begin{equation}\label{eqn:first-NP-problem-statement}
f(X^j)=Y^j \quad j=1, \dots m
\end{equation}
of minimal $H^\infty$ norm. The fact that the domain $\mathcal B^d$ and the nc functions $f$ respect direct sums means that every such problem can be immediately reduced to a ``one-point problem'': putting $X=\oplus X^j$ and $Y=\oplus Y^j$, the problem (\ref{eqn:first-NP-problem-statement}) has a solution if and only if the one-point problem
\begin{equation}\label{eqn:NP-onepoint-statement}
f(X)=Y,
\end{equation}
has a solution, and the minimal norms are the same.   Instead of asking for the minimal norm, one could pose the essentially equivalent problem of asking whether or not there exists a solution of norm $\|f\|_\infty\leq 1$. It is also possible to consider a generalized problem in which the single $n\times n$ matrix $Y$ is replaced by an $s\times t$ block matrix $(Y_{ij}), i=1, \dots, s; j=1, \dots, t$, where each $Y_{ij}$ is an $n\times n$ matrix. We then seek an $s\times t$ matrix of nc functions $F=(f_{ij})$ so that
\[
f_{ij}(X)=Y_{ij} \quad i=1, \dots, s; j=1, \dots, t
\]
and the $H^\infty$ norm of the $s\times t$ matrix nc function $F$ is the evident supremum norm. 

When $d=1$ and all the $X^j,Y^j$ are $1\times 1$ matrices this reduces to the classical Nevanlinna-Pick interpolation problem in the unit 
disk. In that case, interpolating functions always exists (e.g. one can take a Lagrange interpolating polynomial), so the problem is just one 
of finding the minimal $H^\infty$ norm. However in the noncommutative setting solutions need not always exist; a necessary and sufficient 
condition for a solution of the one-point problem \eqref{eqn:NP-onepoint-statement} is that the matrix $Y$ belong to the subalgebra of 
$\cM_n$ generated by the coordinates $X_1, \dots, X_d$ of the point $X$. 

Consider for a moment the classical Nevanlinna-Pick interpolation problem: given points $x^1, \dots, x^m$ in the open unit disk, and complex numbers $y^1, \dots, y^m$, does there exist an analytic function $f$, bounded by $1$ in the disk, with
\begin{equation}\label{eqn:NP-classical}
f(x^j)=y^j, \quad j=1, \dots, m?
\end{equation}
The problem has a solution if and only if the {\em Pick matrix}
\[
P= \left( \frac{1-y^i\overline{y^j}}{1-x^i\overline{x^j}}\right)_{i,j=1}^m
\]
is positive semidefinite. It turns out that it is also possible to give a necessary and sufficient condition for the existence of a norm-one 
solution of the noncommutative problem (\ref{eqn:NP-onepoint-statement}) in terms of a single matrix involving the data $X,Y$, this was given 
by Ball, Marx, and Vinnikov in \cite{BMV18}; however the single matrix in question is expressed as an infinite sum and does not have a 
readily apparent closed form. The main result of the present paper is to present a closed-from expression for this ``noncommutative Pick 
matrix,'' which is amenable at least in some cases to machine computation, thus providing an effective solution to the problem which is numerically stable for suitably conditioned data. We construct this closed form expression in Section~\ref{sec:nc pick and matrix PX}, the key idea is a 
matrix involution introduced previously in \cite{pascoe-2019-alg} in connection with the problem of determining the algebra generated by a 
family of matrices $X_1, \dots, X_d$ (which is connected to the interpolation problem, as remarked above).  


\subsection{Failure of the column-row property in $\mathcal{L}^d$}

Let $\mathcal H$ be a Hilbert space, $\BH$ the algebra of bounded operators on $\mathcal H$, and fix a subset $\mathcal A \subseteq \BH.$ For each fixed $n\geq 1$, we define $C_n$ to be the least number $C_n$ such that the inequality
\[
	\Big\|\sum_{i=1}^n A_iA_i^*\Big\|^{1/2}\leq C_n	\Big\|\sum_{i=1}^n A_i^*A_i\Big\|^{1/2}
\]
holds for all $n$-tuples $A_1, \dots , A_n$ of elements from $\mathcal A$.  The {\bf column-row constant} of $\mathcal A$ is the least number $C$ such that 
\[
	\Big\|\sum_{i=1}^\infty  A_iA_i^*\Big\|^{1/2}\leq C \Big\|\sum_{i=1}^\infty A_i^*A_i\Big\|^{1/2} 
\]
for all sequences $(A_i)^\infty_{i=1}$ from $\mathcal A$ for which the sums are SOT-convergent.   Evidently the $C_n$ form an increasing 
sequence with $\lim C_n =C$; it is possible that $C=\infty$.  If $C$ is finite, we say that $\mathcal A$ has the {\bf column-row property}. 
(One could analogously define a row-column property but this will not concern us here.)
For example, $\mathcal A=M_n(\mathbb{C})$ has column-row constant at least equal to $\sqrt{n}.$  It is also easy to verify that for any set of operators $\mathcal A$, we have $C_n\leq \sqrt{n}$ for every $n$. 

Of particular interest is the case when $\mathcal A$ is the algebra of bounded multiplication operators on a reproducing kernel Hilbert 
space. In this setting a number of important spaces are known to have this property. Trivially, the algebra $\mathcal A=H^\infty(\mathbb D)$ 
(the algebra of bounded analytic functions in the unit disk $\mathbb D$, equipped with the supremum norm) has the column-row property. Beyond 
this, the multiplier algebra of the Dirichlet space $\mathcal D$ over the unit disk has the column-row property with constant $C\leq 
\sqrt{18}$ \cite{trent-2004}, and the multiplier algebras of the {\em Drury-Arveson spaces} $H^2_d$ over the unit ball $\mathbb B^d\subset 
\mathbb C^d$ (denoted $Mult(H^2_d)$) have the column-row property with constants $C=C(d)$; \cite{AHMR-2018}, in the proof given in 
\cite{AHMR-2018} the obtained estimates on the constants $C(d)$ grow to infinity with the dimension $d$. The Dirichlet space and the $H^2_d$ 
spaces are particular examples of spaces with a {\em complete Nevanlinna-Pick (CNP) kernel}, the column-row property (when it holds) turns 
out to have important consequences in such spaces, e.g. in applications to interpolating sequences \cite{AHMR-2019} and in factorization of 
weak products \cite{jury-martin-2019}, \cite{AHMR-2018}.  


The connection with the present paper is as follows: it turns out that the multiplier algebras of $H^2_d$ can be viewed as the ``commutative collapse'' of the so-called {\em free semigroup 
algebras} $\mathcal L_d$, $d\geq 2$. (We refer to the survey \cite{Dav01} for the basic facts about the free semigroup algebras.) One may then ask if an analog of the column-row property holds for these algebras. In detail, if we let $\mathbb F_d^+$ denote the free semigroup of all 
noncommuting words in $d$ letters  $\{1, 2, \dots, d\}$, (including the ``empty word'' $\varnothing$), then we can form a Hilbert space 
$\mathcal F^2_d$ with orthonormal basis $\{\xi_w\}_{w\in\mathbb F^+_d}$. For each letter $i$ we define an operator
\[
L_i\xi_w =\xi_{iw}, \quad w\in\mathbb F^+_d.
\]
The operators $L_i$ are isometries with orthogonal ranges, i.e. we have $L_i^*L_j=\delta_{ij} I$ for $i, j=1, \dots , d$. The free semigroup algebra is the WOT-closed algebra generated by the $L_i$, $i=1, \dots, d$. 

By a result of Popescu (\cite[Theorem 3.1]{popescu-2006}) the free semigroup algebra $\mathcal L_d$ may be completely isometrically 
identified with the algebra $H^\infty(\B^d)$ of bounded nc functions in the row ball. Moreover the map $f\to f(z)$ obtained by restricting an 
nc function to level 1 (the scalar unit ball $\mathbb B^d\subset \mathbb C^d$) is a completely contractive homomorphism from $H^\infty(\B^d)$ 
onto the multiplier algebra $Mult(H^2_d)$, (see \cite[Theorem 4.4.1, Subsection 4.9]{shalit-2013} or \cite[Section 
2]{davidson-pitts-NP-1998}; this latter reference makes clear the connection with Nevanlinna-Pick interpolation) .

In particular, we observe that for each $d$, and $n$, the column-row constants $C_n$ for $H^\infty(\B^d)$ dominate the corresponding 
constants for $Mult(H^2_d)$. The question naturally arises of whether or not the free semigroup algebras $H^\infty(\B^d)$ have the column-row 
property. It turns out they do not; in fact we will prove the constant is infinity for $H^\infty(\B^d),$ and the constant $C_n = \sqrt{n}.$

\begin{theorem}
	\label{thm:row-col fails for Fock}
For the algebra of bounded nc functions in the row ball, $H^\infty(\B^d)$, $d\geq 2$, we have $C_n=\sqrt{n}$ for all $n=1, 2, \dots$. 
\end{theorem}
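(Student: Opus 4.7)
The upper bound $C_n \leq \sqrt n$ is an elementary C$^*$-algebra fact valid in any ambient algebra: since $A_i^\ast A_i \leq \sum_j A_j^\ast A_j$ we have $\|A_i\|^2 \leq \|\sum_j A_j^\ast A_j\|$ for each $i$, and thus
\[
\Big\|\sum_i A_i A_i^\ast \Big\| \leq \sum_i \|A_i\|^2 \leq n\,\Big\|\sum_j A_j^\ast A_j\Big\|.
\]
The content of the theorem is the matching lower bound, and the plan is to realize the extremal ratio through the noncommutative Nevanlinna--Pick interpolation theorem on a single, carefully chosen node.

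The target values I propose to interpolate are the rank-one matrices $Y_i := e_1 e_i^\ast \in \cM_n$ for $i = 1,\dots,n$, since a direct computation gives $\sum_i Y_i^\ast Y_i = I_n$ (operator norm $1$) and $\sum_i Y_i Y_i^\ast = n\,e_1 e_1^\ast$ (operator norm $n$). Thus the column $(Y_1,\dots,Y_n)^T$ is an isometry while the row $(Y_1,\dots,Y_n)$ has operator norm exactly $\sqrt n$, which is the extreme ratio we wish to witness.

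I then choose a point $X = (X_1, \dots, X_d) \in \B^d_n$ whose coordinate matrices generate the full algebra $\cM_n$ --- any generic choice works, e.g.\ a rescaling $X = r\hat X$ of a pair of non-commuting rank-one matrices. Each $Y_i$ then lies in the algebra generated by $X_1,\dots,X_d$, so the column-valued noncommutative Nevanlinna--Pick problem $A_i(X) = Y_i$ admits at least one solution (for instance a polynomial in the $L_j$'s). The real question is whether it admits a \emph{contractive} solution, i.e.\ a column $A = (A_1,\dots,A_n)^T$ of nc functions with $\|\sum_i A_i^\ast A_i\|^{1/2} = \|A\|_\infty \leq 1+\ep$. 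By the main theorem of Section~\ref{sec:nc pick and matrix PX}, this is equivalent to positive semidefiniteness of the elementary Pick matrix associated to the data $(X, (1+\ep)^{-1} Y)$. My plan is to use the closed-form of the elementary Pick matrix to verify PSD-ness for $X$ of the form $r\hat X$ as $r$ approaches a critical value, since in this regime the Pick matrix acquires a transparent block/rank-one structure from which positivity can be read off.

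Given such a column interpolant $A$, the proof is complete: evaluating the row-valued nc function $(A_1,\dots,A_n)$ at $X$ returns $(Y_1,\dots,Y_n)$, so by contractivity of point-evaluation the $H^\infty$-norm of the row is at least $\sqrt n$, whence
\[
\Big\|\sum_i A_i A_i^\ast\Big\|^{1/2} \geq \sqrt n,\qquad \Big\|\sum_i A_i^\ast A_i\Big\|^{1/2} \leq 1+\ep,
\]
giving $C_n \geq \sqrt n/(1+\ep)$. Letting $\ep \to 0$ yields $C_n \geq \sqrt n$. The main obstacle is the concrete PSD verification of the elementary Pick matrix for the specific $(X,Y)$: this is where the closed form derived in Section~\ref{sec:nc pick and matrix PX} is indispensable, and the only place where non-trivial computation is required.
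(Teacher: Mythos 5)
Your plan follows the same overall route as the paper: interpolate the matrix units $Y_i = E_{1i}$ at a carefully rescaled node $X$ approaching the boundary, use the effective nc Pick criterion to produce a nearly contractive column interpolant, and then note that the corresponding row, evaluated at $X$, has norm $\geq \sqrt n$. Your lower bound argument for the row is in fact slightly cleaner than the paper's (you use contractivity of point evaluation directly rather than invoking the $\NP{tX}$-norm of $Y_{\rowt}$), and the upper bound $C_n\leq \sqrt n$ is correctly handled. However there are two genuine gaps.

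First, and most seriously, you defer the entire content of the column estimate to the phrase ``the Pick matrix acquires a transparent block/rank-one structure from which positivity can be read off.'' That is precisely what Theorem~\ref{thm:ANPMainTheorem} establishes, and its proof is not a read-off: it requires showing that $\frac{1-t^2}{t^2}P_{tX}\to \overline W\otimes I_n$ for a strictly positive $W$, which in turn relies on the quantum Perron--Frobenius theorem of Evans and H\o egh-Krohn to identify a simple peripheral eigenvalue $1$ for $T=\sum_i\overline{X_i}\otimes X_i$ and to split $T=G+B$ with $G$ a rank-one idempotent and $r(B)\le 1$ without $1$ in the point spectrum. Without this analysis you have no control on $\norm{Y_{\colt}}_{\NP{tX}}$ as $t\to 1$; the Pick-matrix criterion (Theorem~\ref{thm:effective NP}) by itself does not tell you that the interpolation constant tends to $1$.

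Second, your suggested choice of node fails outright for $n\geq 3$. If $\hat X_1,\hat X_2$ are rank-one, every nonidentity word in them is rank $\le 1$, so $\alg_{\hat X}$ is contained in the span of $I$ together with rank-one matrices supported on at most two-dimensional range and corange subspaces, and thus cannot be all of $\cM_n$ once $n\ge 3$. Then $Y_i=E_{1i}\notin\alg_X$, and the one-point problem $f(X)=Y_{\colt}$ has no solution at all. You need an \emph{irreducible} row co-isometry such as the pair $\big(\tfrac1{\sqrt2}S,\tfrac1{\sqrt2}M\big)$ with $S$ the cyclic shift and $M$ its discrete Fourier conjugate (Example~\ref{group_examples_section}(a)), which does generate $\cM_n$ by a $*$-algebra argument. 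In short, the skeleton is right, but the irreducibility hypothesis and the asymptotic limit $\kappa(tX)\to 1$ are the two non-trivial ingredients, and both are missing from the proposal.
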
 
Thus, in contrast to $Mult(H^2_d)$, the column-row property fails in  $H^\infty(\B^d)$ in the strongest possible way, establishing a stark contrast between the commutative multiplier algebras $Mult(H^2_d)$ and their noncommutative ``parents.'' Theorem \ref{thm:row-col fails for Fock} is proved in Section
\ref{sec:failure of column-row}.

\subsection{Readers' guide} 
Section~\ref{sec:prelims} gives a definition of the $\psi$-involution first introduced in \cite{pascoe-2019-alg}. We use the 
$\psi$-involution liberally throughout Section~\ref{sec:nc pick and matrix PX} to first construct for a (contractive) matrix tuple $X = 
(X_1,\dots, X_d)\subset \cM_n^d$ its elementary Pick matrix: a matrix $P_X$ whose range encodes the unital subalgebra of $\cM_n$ generated by 
$X_1,\dots, X_d$. 
This in turn is used to establish two of the main results of the paper: Theorem~\ref{thm:effective NP} and Theorem~\ref{thm:ANPMainTheorem}.

Section~\ref{sec:boomerang matrix} consists of several technical results leading up to the construction of an isometry in 
Section~\ref{sec:mini-dilations} and its immediate use in Theorem~\ref{thm:mini-dilation}, a so-called ``mini-dilation."

In Section~\ref{sec:failure of column-row} we apply Theorem~\ref{thm:ANPMainTheorem} to prove
Theorem~\ref{thm:row-col fails for Fock}: the column-row property fails for the Fock space on two or more generators.

Section~\ref{sec:examples} gives a more concrete approach to the results in Section~\ref{sec:failure of column-row}. Section~\ref{sec:remarks 
on NPX norm} introduces a condition number for a matrix tuple $X = (X_1,\dots, X_d)$ and explores its properties and interpolating sequences.
Finally, Section~\ref{sec:numerics} discusses computational consequences of effective NP-interpolation.

%
%
%

\section{Preliminaries. The $\psi$ involution and its properties}
\label{sec:prelims}

If $A\in \cM_{n\times m}$ and $B\in \cM_{r\times s}$ then their {\bf Kronecker product} $A\otimes B\in M_{nr\times ms}$ is the block 
matrix given by
\begin{equation}\label{eqn:kronecker_product_defn}
	A\otimes B = 
		\bpm 
			a_{11}B & \dots & a_{1m}B \\ 
			\vdots	& \ddots & \vdots \\
			a_{n1}B & \dots & a_{nm}B
		\epm.
\end{equation}
Or, in other words, $(A\otimes B)_{n(i-1)+k, n(j-1)+ \ell} = A_{i,j}B_{k,\ell}$.

Let $\trp:\M\to\M$ be the transpose operator and let $\vecc:\cM_n\to \cM_{n^2\times 1}$ be the linear map taking the 
	columns of a matrix and stacking them to get a column vector:
\[
	\vecc \bpm
		a_{11} & \dots & a_{1n} \\
		\vdots & \ddots & \vdots \\
		a_{n1} & \dots & a_{nn}
	\epm
	=
	\bpm
		a_{11}\\
		\vdots\\
		a_{n1}\\
		a_{12}\\
		\vdots\\
		a_{nn}
	\epm.
\]
We have the classical identity 
\begin{equation}\label{eqn:classical_vec_identity}
	\vecc(AXB) = (B^T\otimes A)\vecc(X).
\end{equation}
Typically we treat $\vecc$ as a graded function on $\M$.
That is, $\vecc = (\vecc[n])_{n=1}^\infty$, where each $\vecc[n]:\cM_n \to \cM_{n^2\times 1}$, and if $A\in \M_n$, then $\vecc(A) = 
\vecc[n](A)$.
This greatly simplifies notation.

\begin{definition}
	Define $\psi:\cM_{n^2}\to \cM_{n^2}$ to be $\psi = (\trp\circ \vecc) \otimes \vecc$.
	If $A\in \cM_{n^2}$ then we write the evaluation of $\psi$ on $A$ as
	\[
		A^\psi = \big[(\trp\circ \vecc) \otimes \vecc\big](A).
	\]
	Or, more explicitly, if $C,D\in \cM_n$ then
	\begin{equation}\label{eqn:psi_working_defn}
		[C\otimes D]^\psi = \vecc(C)^T\otimes \vecc(D) = \vecc(D)\vecc(C)^T.
	\end{equation}
Indeed, observe
	\[
		[A\sot B]^\psi = [(\trp\circ \vecc)\sot \vecc](A\sot B) = \vecc(A)^T\sot \vecc(B) = \vecc(B)\vecc(A)^T.
	\]
\end{definition}

Our motivation for writing $\psi$ as a superscript is that $\psi$ is an involution on $\cM_{n^2}$:
	
	\begin{lemma}
	\label{lem:Psi involution}
	For any $E_{ij}$ and $E_{k\ell}$ we have
	\begin{equation}\label{eqn:psi_on_matrix_units}
		[E_{ij}\otimes E_{k\ell}]^\psi = E_{\ell j}\otimes E_{k i}.
	\end{equation}
	Consequently, $\psi$ is an involution.
\end{lemma}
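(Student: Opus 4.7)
The plan is to verify the identity \eqref{eqn:psi_on_matrix_units} by direct computation on both sides, reducing each to a rank-one $n^2\times n^2$ matrix and matching the location of the unique nonzero entry. Once this is done, the involution property follows from a single round-trip substitution together with linearity.

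First I would unpack $\vecc$ on the matrix units. Directly from the column-stacking definition, $\vecc(E_{ij})$ is the standard basis vector $e_{n(j-1)+i}\in\C^{n^2}$. Substituting into the working formula \eqref{eqn:psi_working_defn} gives
\[
[E_{ij}\otimes E_{k\ell}]^\psi \;=\; \vecc(E_{k\ell})\,\vecc(E_{ij})^T \;=\; e_{n(\ell-1)+k}\,e_{n(j-1)+i}^{\,T},
\]
which is the $n^2\times n^2$ matrix whose only nonzero entry is a $1$ sitting in row $n(\ell-1)+k$, column $n(j-1)+i$.

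Second I would compare this to $E_{\ell j}\otimes E_{ki}$ entrywise via the Kronecker convention \eqref{eqn:kronecker_product_defn}. A generic entry is
\[
(E_{\ell j}\otimes E_{ki})_{n(p-1)+r,\,n(q-1)+s} \;=\; (E_{\ell j})_{p,q}(E_{ki})_{r,s} \;=\; \delta_{p\ell}\,\delta_{qj}\,\delta_{rk}\,\delta_{si},
\]
which is nonzero iff $(p,q,r,s)=(\ell,j,k,i)$, i.e., in row $n(\ell-1)+k$ and column $n(j-1)+i$. This matches the rank-one matrix computed in the first step, proving \eqref{eqn:psi_on_matrix_units}.

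Finally, applying \eqref{eqn:psi_on_matrix_units} a second time with the substitution $(i,j,k,\ell)\mapsto(\ell,j,k,i)$ gives $[E_{\ell j}\otimes E_{ki}]^\psi = E_{ij}\otimes E_{k\ell}$, so $\psi^2$ fixes every elementary tensor $E_{ij}\otimes E_{k\ell}$. Since these span $\cM_{n^2}\cong\cM_n\otimes\cM_n$ and $\psi$ is linear, we conclude $\psi^2=\id$, i.e., $\psi$ is an involution. The only real obstacle here is bookkeeping: keeping the four row/column pairs aligned under the indexing convention of \eqref{eqn:kronecker_product_defn}; once that is unpacked the indices line up with no further work.
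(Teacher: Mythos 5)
Your proof is correct and follows essentially the same route as the paper's: compute $\vecc(E_{k\ell})\vecc(E_{ij})^T$ as a rank-one matrix unit and re-index it as a Kronecker product, then observe that one more application of $\psi$ returns the original tensor and extend by linearity. You simply unpack the $\vecc$ and Kronecker indexing a bit more explicitly than the paper does.
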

	\begin{proof}
	We have the following equalities:
	\begin{align*}
		[E_{ij}\otimes E_{k\ell}]^\psi 
			&= \vecc(E_{k\ell})\vecc(E_{ij})^T\\
			&= E_{n(\ell-1)+k, n(j-1)+i}\\
			&= E_{\ell j}\otimes E_{k i}.
	\end{align*}
Evidently applying $\psi$ again gives us back $E_{ij}\otimes E_{k\ell}$. Therefore $\psi$ is an involution.
\end{proof}


\begin{proposition}[$\psi$ modularity]
	\label{prop:modularity of psi}
	If $U\in \cM_{n^2}$ and $A,B,C,D\in \cM_n$ then
	\[
		\Big[(A\otimes B)U(C\otimes D)\Big]^\psi = (D^T\otimes B)U^\psi (C\otimes A^T).
	\]
\end{proposition}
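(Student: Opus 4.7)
The plan is to reduce to the case when $U$ is an elementary tensor $X\otimes Y$ (with $X,Y\in\cM_n$) and then use the working definition $[X\otimes Y]^\psi = \vecc(Y)\vecc(X)^T$ together with the classical vec identity \eqref{eqn:classical_vec_identity} to push the outer factors through.

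Since both sides of the asserted identity are linear in $U$, and since every $U\in\cM_{n^2}$ can be written as a finite sum $U = \sum_i X_i\otimes Y_i$ with $X_i,Y_i\in\cM_n$, it suffices to verify the identity for $U = X\otimes Y$. By bilinearity of the Kronecker product,
\[
(A\otimes B)(X\otimes Y)(C\otimes D) = (AXC)\otimes (BYD).
\]
Applying the definition \eqref{eqn:psi_working_defn} gives
\[
\bigl[(AXC)\otimes (BYD)\bigr]^\psi = \vecc(BYD)\,\vecc(AXC)^T.
\]
Now \eqref{eqn:classical_vec_identity} yields $\vecc(BYD)=(D^T\otimes B)\vecc(Y)$ and $\vecc(AXC)=(C^T\otimes A)\vecc(X)$, and transposing the latter gives $\vecc(AXC)^T = \vecc(X)^T(C\otimes A^T)$. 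Substituting,
\[
\vecc(BYD)\,\vecc(AXC)^T = (D^T\otimes B)\,\vecc(Y)\vecc(X)^T\,(C\otimes A^T) = (D^T\otimes B)(X\otimes Y)^\psi (C\otimes A^T),
\]
which is exactly the desired identity for $U=X\otimes Y$. Summing over the decomposition of $U$ completes the proof.

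There is no real obstacle here — the identity is essentially a bookkeeping exercise, and the only nontrivial ingredient is the classical vec identity, which handles the asymmetry between the ``$A,C$''-side (which must be transposed) and the ``$B,D$''-side (which is not) in the final formula. As an alternative, one could verify \eqref{eqn:psi_on_matrix_units} on matrix units $A=E_{ab}$, $B=E_{cd}$, $C=E_{ef}$, $D=E_{gh}$, $U=E_{ij}\otimes E_{k\ell}$, but the vec-based argument is cleaner and makes the appearance of the transposes on $A$ and $D$ transparent.
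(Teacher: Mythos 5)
Your proof is correct and is essentially the same argument as the paper's: both reduce to elementary tensors, apply the working definition $[\,\cdot\,]^\psi = \vecc(\cdot)\vecc(\cdot)^T$, and then push the outer Kronecker factors through using the classical vec identity \eqref{eqn:classical_vec_identity}. The only cosmetic difference is that the paper specializes to the matrix-unit basis $U = E_{ij}\otimes E_{k\ell}$ before extending by linearity, whereas you work with a general elementary tensor $U=X\otimes Y$; the computation and the bookkeeping of transposes are identical.
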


\begin{proof}
	First we recall that if $u,v$ are column vectors, then $u^T\otimes v = vu^T$.
	We first prove the result for $U=E_{ij}\otimes E_{k\ell}$. Using (\ref{eqn:classical_vec_identity}) and (\ref{eqn:psi_working_defn}), we have 
	\begin{align*}
	\Big[(A\otimes B)(E_{ij}\otimes E_{k\ell})(C\otimes D)\big]^\psi
			&=\Big[(AE_{ij}C)\otimes (BE_{k\ell}D)\Big]^\psi\\
			&= \vecc(BE_{k\ell}D) (\vecc(AE_{ij}C) )^T\\
			&= \left[(D^T\otimes B)\vecc(E_{k\ell})\right]\left[(C^T\otimes A)\vecc(E_{ij})\right]^T\\
			&= (D^T\otimes B)\vecc(E_{k\ell})\vecc(E_{ij})^T(C\otimes A^T)\\
			&= [D^T\otimes B][E_{ij}\otimes E_{k\ell}]^\psi[C\otimes A^T].
	\end{align*}
	Since $\psi$ is linear and the $E_{ij}\otimes E_{k\ell}$ form a basis for $\cM_{n^2}$, we are done.
\end{proof}

\begin{remark}
	We could just as easily use Equation~\eqref{eqn:psi_on_matrix_units} as the definition of the $\psi$-involution. The $\psi$-involution 
	was introduced by the third named author in \cite{pascoe-2019-alg}, where its key properties (including the modularity property) were 
	described; we have included proofs here for the sake of convenience. What we will call the {\em elementary Pick matrix}, defined in the 
	next section, also appears in \cite{pascoe-2019-alg}. 
\end{remark}

%
%
%

\section{Noncommutative Pick Interpolation and the matrix $P_X$}
\label{sec:nc pick and matrix PX}

Recall $X = (X_1,\dots, X_d)\in \cM_n^d$ is a {\bf row contraction} if $[\begin{matrix} X_1 & \dots & X_d \end{matrix}]$ has norm strictly 
less than $1$:
\[
	\Big\| \sum_{i=1}^d X_iX_i^* \Big\| <1.
\]
\begin{definition} 
	For $X = (X_1,\dots, X_d)\in \cM_n^d$, we put 
\[	
	P_X := \Big[(I_n \otimes I_n - \sum_{i=1}^d \overline{X_i}\otimes X_i)^{-1}\Big]^\psi
\]
(when it is defined).  If $X$ is a row contraction, then it follows from \cite[ Proposition 3.1]{pascoe-2019-QPF} that the spectral radius of $ \sum_{i=1}^d \overline{X_i}\otimes X_i $ is strictly less than $1$, so $P_X$ exists.	In this case we call $P_X$ the {\bf elementary Pick matrix}. 
\end{definition}

\begin{definition}
	For $\bbx = \set{x_1,\dots, x_d}$, a set of freely noncommuting indeterminates, let $\fax = \fralg{x_1,\dots, x_d}$ 
		denote the unital free semigroup generated by $x_1,\dots, x_d$ with empty product $\varnothing$ acting as the identity. If $w=i_1i_2\dots i_n$ is a word in the letters $\{1, 2, \dots, d\}$ we write
		\[
		x^w := x_{i_1}x_{i_2}\cdots x_{i_n}.
		\]
In particular, for a system of matrices $X=(X_1, \dots, X_d)$ and a word $w$ we write
\[
	X^w := X_{i_1}X_{i_2}\cdots X_{i_n}.
\]
\end{definition}
Thus, when $X$ is a row contraction we can express $P_X$ as a norm-convergent power series
\[
P_X= \sum_{n=0}^\infty \left(\sum_{i=1}^d \overline{X_i}\otimes X_i \right)^n =\sum_{w\in \fax} {\overline{X}}^w\otimes X^w.
\]

Suppose now $X$ is a row contraction.
We recall the one-point nc Pick interpolation problem from the introduction: given $Y = (Y_{i,j})\in \cM_{s\times t}\otimes \cM_n$, does 
there exist an nc function $f\in \cM_{s\times t}\otimes H^\infty(\B^d)$ such that $\norm{f}_\infty\leq 1$ and $f(X) = Y$?

From \cite[Theorem 6.5]{BMV18}, this problem has a solution if and only if the map $\Phi:\cM_n\to \cM_{ns}$ 
\[
	\Phi(H) = \sum_{w\in \fax} (X^w H X^{w*})\otimes I_s - Y\left(\sum_{w\in \fax}(X^w H X^{w*})\otimes I_t\right)Y^*
\]
is completely positive.  Our goal is to recast this condition in terms of the elementary Pick matrix $P_X$ introduced above. To do this we first apply Choi's criterion to reduce the problem of checking the complete positivity of $\Phi$ to checking the positivity of a single matrix. We then use the $\psi$ involution to express this single matrix in closed form.

\begin{definition}
	For each $n$, the {\bf Choi Matrix} is the matrix 
\begin{equation}\label{eqn:choi_matrix_defn}
\mathfrak{C}_n = \sum_{i,j=1}^n E_{ij}\otimes E_{ij} \in \cM_{n^2}
\end{equation}
	By Choi's Theorem (see e.g. \cite[Theorem 3.14]{paulsen-book}), a map $\Phi:\cM_n\to \cM_m$ is completely positive if and only if the 
	single $nm\times nm$ matrix
	\[
		(I_n\otimes \Phi)(\Choi_n) = \sum_{i,j} E_{ij} \otimes \Phi(E_{ij})
	\]
is positive semidefinite. 	

The Choi Matrix also has the following important relation with $\psi$:
	\begin{equation}\label{eqn:choi_psi_identity}
		[I_{n^2}]^\psi = \mathfrak{C}_n,
	\end{equation}
as is trivially verified using (\ref{eqn:psi_on_matrix_units}) and (\ref{eqn:choi_matrix_defn}).
\end{definition}

\begin{lemma}
	\label{lem:Choi to P}
	If $X = (X_1,\dots, X_d)\in \cM_n^d$ is a row contraction then
	\[
		\sum_{w\in \fax} (I \otimes X)^w \mathfrak{C}_n (I\otimes X)^{w*} = P_X.
	\]
\end{lemma}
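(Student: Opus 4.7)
The plan is to reduce the identity to a single-word statement and then sum. Observe first that $(I \otimes X)^w$ means the product $(I \otimes X_{i_1})\cdots (I \otimes X_{i_k})$ for $w = i_1\cdots i_k$, which simplifies to $I \otimes X^w$. So the left-hand side becomes
\[
    \sum_{w \in \fax} (I \otimes X^w)\, \Choi_n\, (I \otimes (X^w)^*).
\]
By linearity, the key step is to show that for any single matrix $A \in \cM_n$,
\[
    (I \otimes A)\, \Choi_n\, (I \otimes A^*) \;=\; [\overline{A} \otimes A]^\psi. \tag{$*$}
\]
Once ($*$) is in hand, substituting $A = X^w$, summing over $w$, and using the convergent power series expansion of $P_X$ recorded in the paper,
\[
    P_X = \sum_{w \in \fax} \overline{X}^w \otimes X^w = \left[\;\sum_{w \in \fax} \overline{X}^w \otimes X^w\,\right]^{\!\psi \cdot \psi},
\]
together with linearity and continuity of $\psi$, yields exactly the claimed identity.

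For the single-word identity ($*$), the plan is to apply the $\psi$-modularity formula of Proposition~\ref{prop:modularity of psi} to $U = I_{n^2}$ with the factorization
\[
    \overline{A} \otimes A \;=\; (\overline{A} \otimes A)\cdot I_{n^2} \cdot (I \otimes I).
\]
Taking $\psi$ of both sides and using $\psi$-modularity with $A = \overline{A}$, $B = A$, $C = I$, $D = I$ (in the notation of Proposition~\ref{prop:modularity of psi}) produces
\[
    [\overline{A} \otimes A]^\psi
    \;=\; (I^T \otimes A)\, [I_{n^2}]^\psi\, (I \otimes \overline{A}^T)
    \;=\; (I \otimes A)\, \Choi_n\, (I \otimes A^*),
\]
where the last equality uses the identity $[I_{n^2}]^\psi = \Choi_n$ from \eqref{eqn:choi_psi_identity} and the fact that $\overline{A}^T = A^*$. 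This is exactly ($*$).

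The main obstacle is simply bookkeeping: one must apply the modularity formula in the correct direction (i.e.\ use it to \emph{create} a $\psi$, not strip one off) and match the transposes/conjugates so that $\overline{A}^T$ becomes $A^*$. After that, convergence of the series $\sum_w (I \otimes X^w) \Choi_n (I \otimes (X^w)^*)$ in norm follows from the row-contraction hypothesis via the same spectral radius estimate \cite[Proposition 3.1]{pascoe-2019-QPF} cited in the definition of $P_X$, so the exchange of $\psi$ with the infinite sum is legitimate.
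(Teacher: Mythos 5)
Your proof is correct and takes essentially the same route as the paper's: the single-word identity~($*$) is exactly the $\psi$-modularity computation the paper performs inside its sum (both take $U=I_{n^2}$ with $U^\psi=\Choi_n$), and summing over words with linearity/continuity of $\psi$ reproduces the paper's chain of equalities. One small caution on the final step: the power-series expression you quote from the paper, $P_X = \sum_{w}\overline{X}^w\otimes X^w$, is missing an outer $\psi$ (the definition gives $P_X = \bigl[(I-\sum_i\overline{X_i}\otimes X_i)^{-1}\bigr]^\psi = \bigl[\sum_w \overline{X}^w\otimes X^w\bigr]^\psi$, and note $[I_{n^2}]^\psi=\Choi_n\neq I_{n^2}$, so the two sides of the quoted equation cannot literally be equal); your computation produces $\bigl[\sum_w \overline{X}^w\otimes X^w\bigr]^\psi$, which matches the \emph{definition} of $P_X$, not the typo'd series formula, and the ``$\psi\cdot\psi$'' line should be replaced by a direct appeal to that definition to close the argument cleanly.
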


\begin{proof} Since $X$ is a row contraction, the series is norm convergent. Using the fact that $\psi$ is an involution, the modularity property (Proposition~\ref{prop:modularity of psi}), and the action of $\psi$ on the Choi matrix (\ref{eqn:choi_psi_identity}), we have
	\begin{align*}
		\sum_{w\in \fax} (I \otimes X)^w \mathfrak{C}_n (I\otimes X)^{w*}
			&= \left[{\textstyle\sum}_{w\in \fax}\left[(I\otimes X^w) \mathfrak{C}_n (I\otimes X^{w*})\right]^\psi\right]^\psi\\
			&= \left[{\textstyle\sum}_{w\in \fax}(\overline{X}^w\otimes X^w) I_{n^2} (I\otimes I)\right]^\psi\\
			&= \left[{\textstyle\sum}_{w\in \fax}(\overline{X}\otimes X)^w\right]^\psi\\
			&= \left[\left(I\otimes I - {\textstyle\sum}_{i}\overline{X_i}\otimes X_i\right)^{-1}\right]^\psi\\
			&= P_X
	\end{align*}
\end{proof}

\begin{theorem}
	\label{thm:effective NP}
	Suppose $X = (X_1,\dots, X_d)\in \cM_n^d$ is a row contraction and $Y = (Y_{i,j})_{i,j=1}^{s,t}\in \cM_{s\times t}\otimes \cM_n$
	is an $s\times t$ block matrix with $n\times n$ blocks.
	There exists an nc function $f\in \cM_{s\times t}\otimes H^\infty(\B^d)$ such that $\norm{f} \leq 1$ and $f(X) = Y$ if and only if 
	\[
		P_X\otimes I_s - (I_n\otimes Y)(P_X\otimes I_t)(I_n\otimes Y^*) \succeq 0.
	\]
\end{theorem}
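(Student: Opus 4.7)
The plan is to combine three ingredients: the criterion of \cite{BMV18} quoted immediately before the theorem, Choi's theorem, and Lemma~\ref{lem:Choi to P}. The structure of the argument is essentially forced: \cite[Theorem 6.5]{BMV18} reduces solvability of the one-point problem $f(X)=Y$ with $\|f\|_\infty\le 1$ to complete positivity of the map $\Phi$ displayed above. Choi's theorem then rephrases complete positivity of $\Phi$ as positive semidefiniteness of the single block matrix $(I_n\otimes \Phi)(\Choi_n)=\sum_{i,j}E_{ij}\otimes\Phi(E_{ij})$. The heart of the proof is therefore to identify this matrix with $P_X\otimes I_s-(I_n\otimes Y)(P_X\otimes I_t)(I_n\otimes Y^*)$ in closed form, which is precisely what Lemma~\ref{lem:Choi to P} was built to do.

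For the explicit computation I would split $\Phi(E_{ij})$ into its two pieces. The ``free'' piece $\sum_w (X^wE_{ij}X^{w*})\otimes I_s$ contributes
\[
\sum_{i,j} E_{ij}\otimes\Big[\sum_w (X^wE_{ij}X^{w*})\otimes I_s\Big]=\Big[\sum_w (I_n\otimes X^w)\,\Choi_n\,(I_n\otimes X^{w*})\Big]\otimes I_s,
\]
which by Lemma~\ref{lem:Choi to P} equals $P_X\otimes I_s$. The ``compressed'' piece $-Y\bigl[\sum_w (X^wE_{ij}X^{w*})\otimes I_t\bigr]Y^*$ does not depend on $i,j$ through $Y$, so one can factor $I_n\otimes Y$ on the left and $I_n\otimes Y^*$ on the right of the sum over $(i,j,w)$, and apply Lemma~\ref{lem:Choi to P} once more to obtain $(I_n\otimes Y)(P_X\otimes I_t)(I_n\otimes Y^*)$. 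Subtracting yields the claimed inequality.

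The main obstacle (and the only place where work is actually required) is tensor-factor bookkeeping. The matrix $\Phi(E_{ij})$ lives naturally in $\cM_{ns}$ while $(I_n\otimes\Phi)(\Choi_n)$ lives in $\cM_n\otimes\cM_{ns}$, and the three factors $\cM_n,\cM_n,\cM_s$ need to be arranged consistently with the row-ball interpretation of $Y\in\cM_{s\times t}\otimes\cM_n$ so that $I_n\otimes Y$ (acting on the ``$X$-slot'' of the Choi matrix in the first tensor factor) genuinely commutes past the summation indices in the middle factor. Once the conventions for the placement of $I_s, I_t$ relative to the two $\cM_n$ factors are fixed, the cancellation is immediate and both summands collapse to a single application of Lemma~\ref{lem:Choi to P} in the form $\sum_w(I_n\otimes X^w)\Choi_n(I_n\otimes X^{w*})=P_X$.
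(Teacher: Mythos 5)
Your proposal is correct and follows essentially the same route as the paper: invoke \cite[Theorem 6.5]{BMV18} to reduce to complete positivity of the map $\Phi$, apply Choi's theorem, and then use Lemma~\ref{lem:Choi to P} to collapse each of the two summands of the resulting Choi matrix into $P_X\otimes I_s$ and $(I_n\otimes Y)(P_X\otimes I_t)(I_n\otimes Y^*)$ respectively. The tensor-factor bookkeeping you flag is exactly the short chain of Kronecker-product manipulations the paper performs before the final appeal to Lemma~\ref{lem:Choi to P}.
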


\begin{proof}
	Let $\Phi:\cM_n\to \cM_{ns}$ be the operator defined by
	\[
		\Phi(H) = \sum_{w\in \fax} (X^w H X^{w*})\otimes I_s - Y\left(\sum_{w\in \fax}(X^w H X^{w*})\otimes I_t\right)Y^*.
	\]
	Next observe
	\begin{align*}
		\sum_{i,j} E_{ij}\sot \sum_w X^w E_{ij} X^{w*}\sot I_s
			&= \sum_w \sum_{i,j} (E_{ij}\sot I)(I\sot X^wE_{ij}X^{w*})\sot I_s\\
			&= \sum_w \sum_{i,j} (I\sot X^w)(E_{ij}\sot E_{ij})(I\sot X^{w*})\sot I_s\\
			&= \sum_w (I\sot X)^w \Choi_n (I\sot X)^{w*}\sot I_s\\
			&= P_X\otimes I_s,
	\end{align*}
	where the last equality uses Lemma~\ref{lem:Choi to P}.
	Hence,
	\begin{align*}
		(I_n\sot \Phi)(\Choi_n) &= P_X\otimes I_s - (I_n\otimes Y)(P_X\otimes I_t)(I_n\otimes Y^*).
	\end{align*}
	Thus, Choi's Theorem tells us $\Phi$ is completely positive if and only if 
		$P_X\sot I_s - (I_n\sot Y)(P_X\sot I_t)(I_n\sot Y^*)\succeq 0$.
	Finally, as already noted, \cite[Theorem 6.5]{BMV18} says that $\Phi$ is completely positive if and only if there is a solution to the interpolation problem. This completes the proof.
\end{proof}



We now turn an essentially equivalent version of the interpolation problem: if $X$ is a row contraction and $Y$ is given, find the minimal 
norm of a solution $f$ to the interpolation problem $f(X)=Y$. First of all, we must note that there may not be any $f$ with $f(X)=Y$; this 
will happen if and only if the blocks of $Y$ belong to the subalgebra generated by $X_1, \dots, X_d$. By the main Theorem 
of \cite{pascoe-2019-alg}, we know that a matrix $Z$ is in the algebra generated by $X$ if and only if $\vecc(Z)\in \ran(P_X)$. When each 
block of $Y$ belongs to this algebra, then there will exist an nc polynomial matrix $f$ with $f(X)=Y$.

We will define the NP-norm of $Y$ to be the minimal norm of a solution to $f(X)=Y$, and show how to compute this minimal norm using $P_X$. 
\begin{definition}
	Suppose $X = (X_1,\dots, X_d)\in\B^d\subset \cM_n^d$ and $Y\in \cM_{s\times t}\otimes \cM_n$.
	We define the $\NP{X}$ norm of $Y$ to be
	\begin{equation}\label{NP_norm_def}
		\norm{Y}_{\NP{X}} := \inf_{\substack{f\in M_{s\times t}\otimes H^\infty(\mathcal B^d) \\ f(X) = Y}} \norm{f}_{H^\infty}
	\end{equation}
	and note that implicitly we consider only nc functions $f$.
	Moreover, if $\|\sum_{i=1}^d X_iX_i^*\|=1$, so that $X$ lies in the boundary of $\mathcal B^d$ at level $n$, we define the $\ANP{X}$ norm of $Y$  (the asymptotic $\NP{X}$ norm at the boundary point $X$)  as
	\begin{equation}\label{ANP_norm_def}
		\|Y\|_{\ANP{X}}:=\lim_{t \nearrow1}\norm{Y}_{\NP{tX}}.
	\end{equation}
\end{definition}
We note that the $\NP{X}$ norm could be equivalently defined by taking the infimum just over nc polynomial matrices $f$ in the expression 
(\ref{NP_norm_def})

We will compute $\snorm{Y}_{\NP{X}}$ for all $Y$ whose blocks are in the algebra generated by $X$, and $\snorm{Y}_{\ANP{X}}$ for a special 
subclass of boundary points $X$, which will be useful in applications.  We first introduce some notation and make some elementary 
observations.

\begin{definition}
	If $X = (X_1,\dots, X_d)\in \cM_n^d$ then let $\alg_X$ denote the unital subalgebra of $\cM_n$ generated by $X_1,\dots, X_d$.
\end{definition}

Note:
\begin{itemize}
\item If $X = (X_1,\dots, X_d)\in \cM_n^d$ is a row contraction, then $P_X$ is self-adjoint and positive semi-definite,
so $P_X^{1/2}$ exists, and we let $\finv{P_X}$ denote the Moore-Penrose pseudoinverse of $P_X^{1/2}$.
\item Note $\finv{P_X}P_X^{1/2} = P_X^{1/2}\finv{P_X} = Q_X$, where $Q_X$ is the projection onto $\ran(P_X) = \vecc(\alg_X)$.
\item The matrices $P_X$ and $P_X^{1/2}$ are invertible if and only if $\alg_X = \cM_n$. In this case, $\finv{P_X} = P_X^{-1/2}$.	
\end{itemize}
%


\begin{corollary}
	\label{cor:NP norm formula}
	Suppose $Y\in \cM_{s\times t}\otimes \alg_X $. 
	If 
	\[
		{}^PY^P=(\finv{P_X}\otimes I_s)(I_n\otimes Y)(P_X^{1/2}\otimes I_t)
	\]
	then
	\[
		\norm{Y}_{\NP{X}} = \norm{{}^PY^P}.
	\]
\end{corollary}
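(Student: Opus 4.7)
The plan is to recast the infimum defining $\norm{Y}_{\NP{X}}$ as a matrix inequality via Theorem~\ref{thm:effective NP} and then recognize it as an instance of Douglas's range-inclusion/factorization lemma. First, applying Theorem~\ref{thm:effective NP} to the rescaled data $Y/c$ and using the evident homogeneity of the interpolation problem in $Y$, I obtain
\[
\norm{Y}_{\NP{X}} = \inf\set{c\geq 0 : c^2(P_X\otimes I_s) \succeq (I_n\otimes Y)(P_X\otimes I_t)(I_n\otimes Y^*)}.
\]
Moreover, the hypothesis $Y\in\cM_{s\times t}\otimes\alg_X$ guarantees, via the main theorem of \cite{pascoe-2019-alg}, that each block of $Y$ is a polynomial in $X_1,\dots,X_d$; assembling these into an nc polynomial matrix $f$ gives $f(X)=Y$ with $\snorm{f}_\infty<\infty$ (polynomials are bounded on $\mathcal B^d$), so the feasible set in the displayed infimum is nonempty.

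Next, I would factor $P_X = P_X^{1/2}P_X^{1/2}$ and set
\[
B := P_X^{1/2}\otimes I_s, \qquad A := (I_n\otimes Y)(P_X^{1/2}\otimes I_t),
\]
so that $BB^* = P_X\otimes I_s$ and $AA^* = (I_n\otimes Y)(P_X\otimes I_t)(I_n\otimes Y^*)$. The formula above then reads $\norm{Y}_{\NP{X}} = \inf\set{c\geq 0 : c^2 BB^* \succeq AA^*}$. Provided $\ran A\subseteq \ran B$, Douglas's lemma identifies this infimum with $\norm{B^\dagger A}$, where $B^\dagger$ denotes the Moore--Penrose pseudoinverse. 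Since pseudoinverses pass through tensoring with identities, $B^\dagger = \finv{P_X}\otimes I_s$, and a direct multiplication then yields $B^\dagger A = {}^PY^P$, which is exactly the quantity to be computed.

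The one remaining point is the range inclusion. Any single witness $c$ to $c^2 BB^*\succeq AA^*$ forces $\ker B^*\subseteq \ker A^*$ (by testing the inequality on vectors in $\ker B^*$), which is equivalent to $\ran A\subseteq \ran B$; by the first paragraph such a witness exists since $\norm{Y}_{\NP{X}}<\infty$. The main obstacle, as I see it, is simply the tensor-factor bookkeeping --- confirming that $B^\dagger$ has the asserted form and that $B^\dagger A$ really does collapse to $(\finv{P_X}\otimes I_s)(I_n\otimes Y)(P_X^{1/2}\otimes I_t)$ --- but once the Douglas setup is in place this is routine linear algebra.
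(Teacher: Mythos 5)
Your proposal is correct and follows essentially the same route as the paper's proof: rescale Theorem~\ref{thm:effective NP} to turn $\norm{Y}_{\NP{X}}$ into the infimal $c$ for which $c^2(P_X\otimes I_s)\succeq (I_n\otimes Y)(P_X\otimes I_t)(I_n\otimes Y^*)$, then recognize that infimum as $\norm{{}^PY^P}$. The only cosmetic difference is that you cite Douglas's range-inclusion lemma to package the step from the operator inequality to the norm of $B^\dagger A$, whereas the paper carries out the pseudoinverse manipulation by hand (sandwiching with $\finv{P_X}\otimes I_s$ and comparing against $Q_X\otimes I_s$); your range-inclusion remark, which uses $Y\in\cM_{s\times t}\otimes\alg_X$ to guarantee the Douglas hypotheses hold, is exactly where the paper implicitly uses the same hypothesis to justify the converse direction of its ``if and only if.'' Invoking Douglas's lemma is arguably cleaner and makes the bidirectional equivalence explicit, but the underlying mechanism is the same.
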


\begin{proof}
	We begin by multiplying the main equation in Theorem~\ref{thm:effective NP} on the left and right by $(\finv{P_X}\otimes I_s)$:
	\begin{align*}
		Q_X\sot I_s
			&\succeq (\finv{P_X}\sot I_s)(I_n\sot Y)(P_X\sot I_t)(I_n\sot Y^*)(\finv{P_X}\sot I_s)\\
			&= {}^PY^P({}^PY^P)^*
	\end{align*}
	Suppose $c>0$. By considering the interpolation problem for $c^{-1}Y$ instead of $Y$, it follows from above that there exists 
		$f\in \cM_{s\times t}\sot H^\infty(\B^d)$ such that $\norm{f} = c$ and $f(X) = Y$ if and only if ${}^PY^P({}^PY^P)^*\preceq 
		c^2Q_X\sot I_s$ if and only if $\norm{{}^PY^P} \leq c$.
	If there exists $f\in \cM_{s\times t}\sot H^\infty(\B^d)$ such that $\norm{f} = c$ and $f(X) = Y$, then $\snorm{Y}_{\NP{X}}\leq 
	c$.
	Hence, $\snorm{{}^PY^P}\leq c$ implies $\snorm{Y}_{\NP{X}}\leq c$.
	
	On the other hand, if $\snorm{Y}_{\NP{X}} \leq c$ then for each $\ep>0$ there exists $f_\ep\in \cM_{s\times t}\sot H^\infty(\B^d)$ such 
	that $\norm{f_\ep} = c+\ep$ and $f_\ep(X) = Y$.
	However, this implies $\snorm{{}^PY^P}\leq c+\ep$ for all $\ep>0$, hence $\snorm{{}^PY^P}\leq c$.
	Thus, $\snorm{{}^PY^P} \leq c$ if and only if $\snorm{Y}_{\NP{X}}\leq c$.
	Therefore, $\snorm{Y}_{\NP{X}} = \snorm{{}^PY^P}$.
\end{proof}

\begin{theorem}\label{thm:ANPMainTheorem}
	Let $X = (X_1, \ldots, X_d)\in M_n(\mathbb{C})^d$ be a row co-isometry (that is, $\sum_{i=1}^d X_iX_i^*=I$). 
	If the algebra generated by $X_1, \ldots, X_d$ is all of $M_n(\mathbb{C})$ then
	\begin{align*}
		\lim_{t\to 1} (P_{tX}^{-1/2}\sot I_s)(I_n\sot Y)(P_{tX}^{1/2}\sot I_t)
			&= I_n\sot Y
	\end{align*}
	and consequently
	\[
		\norm{Y}_{\ANP{X}} = \lim_{t\to 1} \norm{Y}_{\NP{tX}} = \norm{Y}.
	\]
\end{theorem}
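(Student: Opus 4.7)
The plan is to reduce the conclusion to an operator-convergence statement and then analyze the asymptotics of $P_{tX}$ via a spectral decomposition of the completely positive map $\Phi(A):=\sum_i X_iAX_i^*$ on $M_n(\C)$. By Corollary~\ref{cor:NP norm formula}, $\snorm{Y}_{\NP{tX}}=\snorm{(P_{tX}^{-1/2}\otimes I_s)(I_n\otimes Y)(P_{tX}^{1/2}\otimes I_t)}$, and because $\snorm{I_n\otimes Y}=\snorm{Y}$, once the displayed operator convergence is established the norm identity $\snorm{Y}_{\ANP{X}}=\snorm{Y}$ follows by taking norms and letting $t\nearrow 1$.

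Since $X$ is a row co-isometry, $\Phi$ is unital with $\Phi(I)=I$, and the hypothesis $\alg_X = M_n(\C)$ makes $\Phi$ irreducible in the sense that the $X_i$ have no common invariant subspace. By Perron-Frobenius for irreducible unital CP maps on $M_n(\C)$, the eigenvalue $1$ is simple; its right eigenvector is $\vecc(I)$, and a left eigenvector is $\vecc(\rho)^*$ for a unique positive-definite density matrix $\rho$; all other eigenvalues $\lambda$ of $\Phi$ satisfy $\lambda\ne 1$ (though possibly $|\lambda|=1$). Writing $P_1=\vecc(I)\vecc(\rho)^*$ for the rank-one spectral projection onto the Perron eigenspace, one has
\[
(I_{n^2}-t^2\Phi)^{-1}=\frac{P_1}{1-t^2}+S_t,
\]
with $S_t$ uniformly bounded as $t\nearrow 1$ (peripheral eigenvalues $\lambda\ne 1$ on the unit circle give contributions $P_\lambda/(1-t^2\lambda)$ whose denominators stay bounded away from zero). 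Applying $\psi$ and the identity $\psi(\vecc(A)\vecc(B)^*)=\overline{B}\otimes A$ (an immediate consequence of Lemma~\ref{lem:Psi involution}) yields
\[
P_{tX}=\frac{1}{1-t^2}(\overline{\rho}\otimes I_n)+R_t,\qquad R_t:=\psi(S_t)\text{ uniformly bounded},
\]
so that $D:=\overline{\rho}\otimes I_n$ is positive definite and dominates $P_{tX}$ as $t\nearrow 1$.

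The key observation is that $I_n\otimes Y$ commutes with $D=\overline{\rho}\otimes I_n$ (and hence with $D^{\pm 1/2}$) because the two factors act on distinct tensor legs. Combined with standard perturbation bounds $\snorm{A^{1/2}-B^{1/2}}\le\snorm{A-B}/(2\sqrt{\lambda_{\min}(B)})$ and the analogous bound for $A^{-1/2}-B^{-1/2}$, applied with $A=P_{tX}$ and $B=D/(1-t^2)$, this gives the asymptotic expansions $P_{tX}^{\pm 1/2}=(1-t^2)^{\mp 1/2}D^{\pm 1/2}+O((1-t^2)^{\mp 1/2+1})$ in operator norm. Multiplying these expansions and invoking $D^{-1/2}(I_n\otimes Y)D^{1/2}=I_n\otimes Y$ gives $P_{tX}^{-1/2}(I_n\otimes Y)P_{tX}^{1/2}=I_n\otimes Y+O(1-t^2)$, which is the desired scalar-block convergence; the general $(s,t)$-block case follows by tensoring the decomposition of $P_{tX}$ with $I_s$ and $I_t$, which preserves the commutation relation. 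The main delicate step will be the spectral analysis of $\Phi$ under the irreducibility hypothesis, specifically verifying the uniform boundedness of $S_t$ in the presence of peripheral eigenvalues of modulus one; as noted, this is handled by the denominators $1-t^2\lambda$ remaining bounded away from zero for $\lambda\ne 1$.
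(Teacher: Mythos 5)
Your proof is correct and follows essentially the same route as the paper: both isolate the rank-one Perron spectral projection of $T=\sum_i\overline{X_i}\otimes X_i$ (via the quantum Perron--Frobenius theorem and the hypothesis $\alg_X=\cM_n$), show the remainder of the resolvent $(I-t^2T)^{-1}$ stays bounded as $t\nearrow 1$, apply $\psi$ to get $P_{tX}\sim(1-t^2)^{-1}\overline W\otimes I_n$ with $W$ positive definite, and conclude via the commutation of $\overline W\otimes I_n$ with $I_n\otimes Y$. The only cosmetic difference is that you use quantitative perturbation bounds for $A\mapsto A^{\pm1/2}$ where the paper simply passes to the limit of $\frac{1-t^2}{t^2}P_{tX}$ and invokes continuity of the matrix square root on positive-definite matrices.
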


\begin{proof}
	We claim $\frac{1-t^2}{t^2}P_{tX} \rightarrow \overline{W} \otimes I_n$ as $t\rightarrow 1$, for some positive definite matrix $W \in M_n(\mathbb{C})$. If the claim is true, then 
	\begin{align*}
		\lim_{t\to 1} (P_{tX}^{-1/2}\sot I_s)(I_n\sot Y)(P_{tX}^{1/2}\sot I_t)
			&= (\overline{W}^{-1/2}\sot I_n\sot I_s)(I_n\sot Y)(\overline{W}^{1/2}\sot I_n\sot I_t)\\ 
			&= I_n\sot Y
	\end{align*}
from which we conclude, by Corollary~\ref{cor:NP norm formula}, that $\|Y\|_{\ANP{X}} =\|Y\|$. Thus, it is sufficient to prove the claim.

	Consider $T = \sum_{i=1}^d \overline{X_i} \otimes X_i.$ Using the identity (\ref{eqn:classical_vec_identity}) we see 
\[
T \vecc (I) = \vecc \left(\sum_{i=1}^d X_iX_i^*\right) =\vecc(I), 
\]
that is, $\vecc(I)$ is an eigenvector for $T$ with eigenvalue $1.$ Since $X$ is a row contraction and the algebra generated by $X_1, 
\ldots, X_d$ is all of $M_n(\mathbb{C}),$ it follows from the quantum Perron-Frobenius theorem of Evans and H\o egh-Krohn 
\cite{evans-hoegh-krohn-1978} that the spectral radius of $T$ is equal to $1$, and the (generalized) eigenspace corresponding to $1$ is one 
dimensional.  (For a treatment of this result more tailored to the present application, see \cite[Theorem 5.4]{pascoe-2019-QPF}.)
	Let $\vecc(W)$ be the corresponding left eigenvector to $1$ (that is, $\vecc(W)^*T=\vecc(W)^*$), normalized so that $\vecc(W)^*\vecc(I) = 
	1$. From \cite[Theorem 5.4]{pascoe-2019-QPF} and the remarks following it, the matrix $W$ must be positive definite. (This conclusion 
	again relies on the fact that $X_1, \dots , X_d$ generate all of $\cM_n$.) 
	Thus, we may decompose $T$ as
	\[
		T = G+B
	\]
	where $G= \vecc (I) \vecc (W)^*,$ and $B$ is the remainder $T-G.$  It follows that $B$ has spectral radius less than or equal to $1$, and that $1$ is not an eigenvalue of $B$.
	Next, we see that
	\[
		G^2 = \vecc(I)\vecc(W)^*\vecc(I)\vecc(W)^* = \vecc(I)\vecc(W)^* = G.
	\]
	Moreover, since $\vecc(I)$ is a right eigenvector to $1$ and $\vecc(W)$ is a left eigenvector to $1$, we also have
	\[
		TG = G = GT
	\]
	and consequently $GB = BG = 0$.
	In particular, $T^n = (G+B)^n = G^n + B^n = G + B^n$ and
	\begin{align*}
		(I - t^2 T)^{-1} 
			&= \sum_{n=0}^\infty (t^2 T)^n = I + \sum_{n=1}^\infty (t^{2n}G + t^{2n}B^n)\\
			&= I + \frac{t^2}{1-t^2}G + t^2B(I - t^2B)^{-1}.
	\end{align*}
	
	Thus,
	\[
		P_{tX} = [(1-t^2T)^{-1}]^\psi = \left[1 + \frac{t^2}{1-t^2}G + t^2B(1-t^2B)^{-1}\right]^\psi. 
	\]
	Since, as noted above, $r(B)\leq 1$ and $1$ is not an eigenvalue of $B$, we have
	\[
	\lim_{t\to 1} (1-t^2) B(1-t^2 B)^{-1}=0.
	\]
	Taking the limit of $\frac{1-t^2}{t^2}P_{tX}$ as $t\rightarrow 1,$ we obtain
	\[
		\lim_{t\rightarrow 1} \frac{1-t^2}{t^2}P_{tX} =G^\psi= [\vecc I (\vecc W)^*]^{\psi}=\overline W \otimes I_n,
	\]
	which finishes the proof of the claim, and hence the theorem. 
\end{proof}
We restate the above theorem in terms of the condition of the interpolation problem in Section
\ref{sec:remarks on NPX norm}.

\section{Boomerang Matrix}
In this section we collect some calculations which will be useful in the next section. 
\label{sec:boomerang matrix}
\begin{definition}
	Define $\breve{B}\in \cM_{n^3\times n}$ to be
	\[
		\breve{B} = \sum_{i,j=1}^n \vecc(E_{ij})\otimes E_{ij}
			= \sum_{i,j=1}^n e_i\otimes e_j\otimes E_{ij}
	\]
	The matrix $\breve{B}$ is known as the {\bf Boomerang matrix}.
\end{definition}

\begin{lemma}
	If $C\in \cM_n$, then
	\begin{equation}
	\label{eq:Boomerang switch 1}
		(C\otimes I_n\otimes I_n)\breve{B} = (I_n\otimes I_n\otimes C^T)\breve{B}.
	\end{equation}
	Moreover, if $A\in \cM_{n^2}$ and $D\in \cM_n$ then 
	\begin{equation}
	\label{eq:Boomerang switch 2}
		\breve{B}^T(A\otimes CD)\breve{B} = \breve{B}^T([(C^T\otimes I)A(D^T\otimes I)]\otimes I)\breve{B}.
	\end{equation}
\end{lemma}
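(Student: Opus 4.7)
I will prove the two identities separately.

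For \eqref{eq:Boomerang switch 1}, the plan is a direct column-by-column comparison. From the definition, the $k$th column of $\breve{B}$ equals $\sum_i e_i \otimes e_k \otimes e_i$ (using $E_{ij}e_k = \delta_{jk} e_i$). The operator $C \otimes I_n \otimes I_n$ sends this column to $\sum_{i,\ell} C_{\ell i}\, e_\ell \otimes e_k \otimes e_i$, while $I_n \otimes I_n \otimes C^T$ sends it to $\sum_{i,\ell} (C^T)_{\ell i}\, e_i \otimes e_k \otimes e_\ell = \sum_{i,\ell} C_{i\ell}\, e_i \otimes e_k \otimes e_\ell$. Swapping the dummy indices $i \leftrightarrow \ell$ in the second expression shows the two columns agree, establishing the identity.

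For \eqref{eq:Boomerang switch 2}, the plan is to apply \eqref{eq:Boomerang switch 1} twice, once on each side of the ``interior'' factor $A$. The key factorization is
\[
A \otimes CD \;=\; (I_{n^2} \otimes C)(A \otimes D),
\]
so that
\[
\breve{B}^T(A \otimes CD)\breve{B} \;=\; \breve{B}^T(I_{n^2} \otimes C)(A \otimes D)\breve{B}.
\]
Taking the transpose of \eqref{eq:Boomerang switch 1} rewrites $\breve{B}^T(I_{n^2}\otimes C)$ as $\breve{B}^T(C^T \otimes I_{n^2})$; regrouping tensor legs via $(X \otimes I_n)(A \otimes D) = (XA)\otimes D$, applied with $X = C^T \otimes I_n$, yields $\breve{B}^T\bigl(((C^T\otimes I_n)A)\otimes D\bigr)\breve{B}$. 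A second factorization $Y \otimes D = (Y \otimes I_n)(I_{n^2}\otimes D)$, followed by another application of \eqref{eq:Boomerang switch 1} to replace $(I_{n^2}\otimes D)\breve{B}$ with $(D^T \otimes I_{n^2})\breve{B}$, and one final regrouping, produce the right-hand side.

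The main bookkeeping hazard is that $A \in \cM_{n^2}$ is naturally presented as acting on a single $n^2$-dimensional leg, whereas \eqref{eq:Boomerang switch 1} is phrased in the three-fold decomposition $\C^n \otimes \C^n \otimes \C^n$; the argument passes between the groupings $(X\otimes I_n)\otimes I_n$ and $X \otimes I_{n^2}$ several times. This is entirely mechanical, handled by the universal identities $(X\otimes I)(Y\otimes Z)=XY\otimes Z$ and $(X\otimes Y)(I\otimes Z)=X\otimes YZ$, so there is no genuine obstacle beyond careful tracking of parentheses.
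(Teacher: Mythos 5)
Your proof is correct and follows essentially the same route as the paper's: a direct entrywise verification of \eqref{eq:Boomerang switch 1} (the paper does it on matrix units, you compare columns, but it is the same computation), followed by peeling $C$ and $D$ off via two applications of \eqref{eq:Boomerang switch 1} with tensor-leg regroupings to get \eqref{eq:Boomerang switch 2}. The only cosmetic difference is the choice of factorization — you use $A\otimes CD=(I_{n^2}\otimes C)(A\otimes D)$ where the paper uses $A\otimes CD=(A\otimes C)(I_{n^2}\otimes D)$ and first records the one-sided auxiliary identity $\breve{B}^T(A\otimes C)=\breve{B}^T([(C^T\otimes I)A]\otimes I)$ — but the underlying manipulation is identical.
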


\begin{proof}
	We prove the first item for $C=E_{k\ell}$ and then extend linearly to all of $\cM_n$.
	Note
	\begin{align*}
		(E_{k\ell}\otimes I\otimes I)\breve{B} 
			&= \sum_{i,j}^n(E_{k\ell}\otimes I\otimes I)(e_i\otimes e_j\otimes E_{ij})
				= \sum_{j}^n e_k\otimes e_j \otimes E_{\ell j}\\
			&= \sum_{i,j}^n e_i\otimes e_j\otimes E_{\ell k}E_{ij}
				= \sum_{i,j}^n(I\otimes I\otimes E_{\ell k})(e_i\otimes e_j\otimes E_{ij})\\
			&= (I\otimes I\otimes E_{\ell k})\breve{B}.
	\end{align*}
	Extending linearly we have Equation \eqref{eq:Boomerang switch 1}.
	
	Now suppose $A\in \cM_{n^2}$ and $D\in \cM_n$.
	Observe
	\begin{align*}
		(A\otimes D)\breve{B} 
			&= (A\otimes I)(I\otimes I\otimes D)\breve{B} = (A\otimes I)(D^T\otimes I \otimes I)\breve{B}\\
			&= ([A (D^T\otimes I)]\otimes I)\breve{B},
	\end{align*}
	and by taking transposes we have
	\[
		\breve{B}^T(A\otimes C) = \breve{B}^T([(C^T\otimes I)A]\otimes I).
	\]
	Using \eqref{eq:Boomerang switch 1} finally yields
	\begin{align*}
		\breve{B}^T(A\otimes CD)\breve{B} &= \breve{B}^T (A\otimes C)(I\otimes I\otimes D)\breve{B}\\
		&= \breve{B}^T([(C^T\otimes I)A(D^T\otimes I)]\otimes I)\breve{B}.
	\end{align*}
\end{proof}

\begin{lemma}
	\label{lem:P sum X to Choi}
	For any row contraction $X\in \cM_n^d$, 
	\[
		P_X - \sum_{i=1}^d (X_i^T\otimes I_n)P_X(\bar{X_i}\otimes I_n) = [I_{n^2}]^\psi = \mathfrak{C}_n,
	\]
	where $\mathfrak{C}_n = \sum_{i,j=1}^n E_{ij}\otimes E_{ij}$ is the Choi matrix.
\end{lemma}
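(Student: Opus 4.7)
The plan is to attack the identity by applying the $\psi$-modularity property (Proposition~\ref{prop:modularity of psi}) directly and turning the sum on the left-hand side into something of the form $[\cdot]^\psi$, which can then be combined with $P_X=[(I-T)^{-1}]^\psi$ via a telescoping algebraic identity.

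Set $T = \sum_{i=1}^d \overline{X_i}\otimes X_i$, so that (since $X$ is a row contraction) $P_X = [(I_{n^2}-T)^{-1}]^\psi$ by definition. I would first isolate a single summand $(X_i^T\otimes I_n)P_X(\overline{X_i}\otimes I_n)$. Applying the modularity formula with $A=X_i^T$, $B=I_n$, $C=\overline{X_i}$, $D=I_n$ gives
\[
    \bigl[(X_i^T\otimes I_n)\,U\,(\overline{X_i}\otimes I_n)\bigr]^\psi
    = U^\psi(\overline{X_i}\otimes X_i)
\]
for any $U\in\cM_{n^2}$. Since $\psi$ is an involution (Lemma~\ref{lem:Psi involution}), this can be inverted to read
\[
    (X_i^T\otimes I_n)\,P_X\,(\overline{X_i}\otimes I_n)
    = \bigl[(I_{n^2}-T)^{-1}(\overline{X_i}\otimes X_i)\bigr]^\psi.
\]

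Summing this identity over $i=1,\dots,d$ and using linearity of $\psi$ yields
\[
    \sum_{i=1}^d (X_i^T\otimes I_n)\,P_X\,(\overline{X_i}\otimes I_n)
    = \bigl[(I_{n^2}-T)^{-1}T\bigr]^\psi.
\]
Subtracting from $P_X=[(I_{n^2}-T)^{-1}]^\psi$ and using linearity once more gives
\[
    P_X - \sum_{i=1}^d (X_i^T\otimes I_n)\,P_X\,(\overline{X_i}\otimes I_n)
    = \bigl[(I_{n^2}-T)^{-1}(I_{n^2}-T)\bigr]^\psi = [I_{n^2}]^\psi,
\]
and the last quantity is $\Choi_n$ by equation~\eqref{eqn:choi_psi_identity}.

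The only real obstacle is getting the transposes and conjugates straight when invoking modularity; once the substitution $A=X_i^T,\,C=\overline{X_i}$ is made, everything collapses by the telescoping identity $(I-T)^{-1}-(I-T)^{-1}T=I$. No convergence issue arises because the row-contraction hypothesis guarantees $\|T\|_{\rm spec}<1$, so $(I-T)^{-1}$ exists as a bounded operator (equivalently, as the norm-convergent geometric series used in the closed form for $P_X$).
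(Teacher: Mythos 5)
Your proof is correct and follows essentially the same route as the paper's: apply the modularity identity with $A=X_i^T$, $B=D=I_n$, $C=\overline{X_i}$ to move each conjugation term inside a $[\cdot]^\psi$, use that $\psi$ is an involution together with $P_X^\psi=(I_{n^2}-T)^{-1}$, and then collapse the resulting geometric telescope $(I-T)^{-1}(I-T)=I$ to land on $[I_{n^2}]^\psi=\Choi_n$. The only cosmetic difference is that you substitute $P_X^\psi=(I-T)^{-1}$ immediately after applying modularity, whereas the paper carries $P_X^\psi$ symbolically through the combination step before substituting at the end.
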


\begin{proof}
	Observe by Proposition \ref{prop:modularity of psi},
	\begin{align*}
		P_X - {\textstyle\sum}_{i=1}^d(X_i^T\sot I_n)P_X(\bar{X_i}\sot I_n) 
			&= P_X - \left[{\textstyle\sum}_{i=1}^d(I\sot I)P_X^\psi(\bar{X_i}\sot X_i)\right]^\psi\\
			&= \Big[(P_X)^\psi\Big]^\psi - \left[P_X^{\psi}{\textstyle\sum}_{i=1}^d(\bar{X_i}\sot X_i)\right]^\psi\\
			&= \left[[P_X]^\psi(I - {\textstyle\sum}_{i=1}^d \bar{X_i}\sot X_i)\right]^\psi\\
			&= \left[(I - {\textstyle\sum}_{i=1}^d \bar{X_i}\sot X_i)^{-1}(I - {\textstyle\sum}_{i=1}^d \bar{X_i}\sot X_i)\right]^\psi\\
			&= [I_{n^2}]^\psi = \mathfrak{C}_{n}.
	\end{align*}
\end{proof}

\begin{lemma}
	\label{lem:Choi kron prod XHX}
	Suppose $X\in \cM_n^d$ is a row contraction.
	If $H\in \cM_n$ then
	\[
		\breve{B}^T\left(P_X \otimes \left(H - {\textstyle\sum}_{i=1}^d X_iHX_i^*\right)\right)\breve{B} = H.
	\]
\end{lemma}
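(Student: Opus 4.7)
The plan is to apply the Boomerang switch identity \eqref{eq:Boomerang switch 2} to reduce both sides to expressions of the form $\breve{B}^T(M\otimes I_n)\breve{B}$ (which is essentially a partial trace of $M$ over the first tensor factor), then use Lemma~\ref{lem:P sum X to Choi} to replace the ``$P_X$ minus its weighted conjugates'' expression by the Choi matrix $\Choi_n$, and finally evaluate $\breve{B}^T(\Choi_n\otimes H)\breve{B}$ directly.

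Concretely, I expand $K = H - \sum_{i=1}^d X_iHX_i^*$ and apply \eqref{eq:Boomerang switch 2} to each summand. For $\breve{B}^T(P_X\otimes H)\breve{B}$, take $A=P_X$, $C=I_n$, $D=H$; for each $\breve{B}^T(P_X\otimes X_iHX_i^*)\breve{B}$, take $A=P_X$, $C=X_i$, $D=HX_i^*$, and then use the factorization $(HX_i^*)^T = \bar{X_i}H^T$ together with $(\bar{X_i}H^T\otimes I_n) = (\bar{X_i}\otimes I_n)(H^T\otimes I_n)$. This arranges every summand so that the common right factor $(H^T\otimes I_n)$ can be pulled out, yielding
\[
\breve{B}^T(P_X\otimes K)\breve{B} = \breve{B}^T\Big(\big[\big(P_X - \sum_i(X_i^T\otimes I_n)P_X(\bar{X_i}\otimes I_n)\big)(H^T\otimes I_n)\big]\otimes I_n\Big)\breve{B}.
\]
Lemma~\ref{lem:P sum X to Choi} collapses the inner factor to $\Choi_n(H^T\otimes I_n)$, and one last application of \eqref{eq:Boomerang switch 2} in reverse (with $A=\Choi_n$, $C=I_n$, $D=H$) rewrites the result as $\breve{B}^T(\Choi_n\otimes H)\breve{B}$.

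To finish, I evaluate $\breve{B}^T(\Choi_n\otimes H)\breve{B}$ directly. Using $\Choi_n = \sum_{i,j}E_{ij}\otimes E_{ij}$ and $\breve{B} = \sum_{i,j}e_i\otimes e_j\otimes E_{ij}$, a short index calculation shows $\breve{B}^T(E_{ij}\otimes E_{ij}\otimes H)\breve{B} = H_{ij}E_{ij}$, and summing over $i,j$ recovers $H$. The only real bookkeeping is selecting the factorizations so that $(H^T\otimes I_n)$ appears uniformly on the right in every term when \eqref{eq:Boomerang switch 2} is applied; beyond that the argument is routine Kronecker-product manipulation, and I do not expect any genuinely hard step.
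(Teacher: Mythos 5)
Your proof is correct and follows essentially the same route as the paper's: apply the Boomerang switch identity to reduce everything to a single $\breve{B}^T([\cdots]\otimes \cdot)\breve{B}$, invoke Lemma~\ref{lem:P sum X to Choi} to collapse the bracketed factor to $\Choi_n$, and evaluate $\breve{B}^T(\Choi_n\otimes H)\breve{B}$ by a direct index computation. The only cosmetic difference is bookkeeping: the paper moves $X_i$ and $X_i^*$ into the first factor while leaving $H$ in place (implicitly using the variant $\breve{B}^T(A\otimes CHD)\breve{B} = \breve{B}^T([(C^T\otimes I)A(D^T\otimes I)]\otimes H)\breve{B}$), whereas you apply \eqref{eq:Boomerang switch 2} literally with $D=HX_i^*$, factor out $(H^T\otimes I)$ on the right, and then apply \eqref{eq:Boomerang switch 2} once more in reverse to restore the $H$.
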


\begin{proof}
	We begin by computing the left hand side for a fixed $i$:
	\begin{align*}
		\breve{B}^T(P_X \sot (H - X_iHX_i^*))\breve{B} 
			&= \breve{B}^T(P_X \sot H)\breve{B} - \breve{B}^T(P_X \sot  X_iHX_i^*)\breve{B}\\
			&= \breve{B}^T(P_X \sot H)\breve{B} - \breve{B}^T([(X_i^T\sot I)P_X(\bar{X_i}\sot I)] \sot  H)\breve{B}\\
			&= \breve{B}^T([P_X - (X_i^T\sot I)P_X(\bar{X_i}\sot I)]\sot H)\breve{B}.
	\end{align*}
	Summing over $i$ and applying Lemma~\ref{lem:P sum X to Choi} implies
	\[
		\breve{B}^T\left(P_X \otimes \left(H - {\textstyle\sum}_{i=1}^d X_iHX_i^*\right)\right)\breve{B} 
			= \breve{B}^T(\mathfrak{C}_n\otimes H)\breve{B}.
	\]
	Using the definition of the Choi matrix we finish the computation:
	\begin{align*}
		\breve{B}^T(\mathfrak{C}_n\otimes H)\breve{B} 
			&= \sum (e_{i_1}^T\otimes e_{j_1}^T\otimes E_{j_1 i_1})(\mathfrak{C}_n\otimes H)(e_{i_2}T\otimes e_{j_2}\otimes E_{i_2 j_2})\\
			&= \sum(e_{i_1}^T\otimes e_{j_1}^T)\mathfrak{C}_n(e_{i_2}\otimes e_{j_2}) \otimes E_{j_1i_1}HE_{i_2j_2}\\
			&= \sum e_{i_1}^TE_{k\ell}e_{i_2}\otimes e_{j_1}^TE_{k\ell}e_{j_2}\otimes E_{j_1i_1}HE_{i_2j_2}\\
			&= \sum_{k,\ell} 1\otimes 1\otimes E_{k k}HE_{\ell \ell}\\
			&= H.
	\end{align*}
\end{proof}

\begin{definition}
	Let $s,t\in\Z_+$ and let $\cQ_{n,s}\in \cM_{ns}$ denote the permutation matrix such that
	\[
		\cQ_{n,s}(U\otimes V)\cQ_{n,s}^T = V\otimes U
	\]
	for all $U\in \cM_n$ and $V\in \cM_s$.
	In particular, if $W\in \cM_{n\times m}$ and $Z\in \cM_{t\times s}$ then
	\[
		\cQ_{n,t}(W\otimes Z)\cQ_{m,s}^T = Z\otimes W.
	\]

	For each $r\geq 1$, we define the $n^3r\times nr$ matrix
	\[
		\breve{B}_r = \left(\sum_{i,j} e_i\otimes  e_j \otimes I_r\otimes E_{ij}\right)\cQ_{n,r}
	\]
	to be the {\bf ampliated boomerang matrix}.
\end{definition}

\begin{proposition}
	\label{prop:ampliated props}
	Suppose $A\in \cM_{n^2}$, $C\in \cM_n$, $Z\in \cM_{nt\times ns}$ and $W\in \cM_{ns\times nt}$.
	We have the following identities;
	\begin{equation}
		\label{eq:right amp boom eq}
		\big[(A\otimes I_t)(I_n\otimes Z)\otimes C\big]\breve{B}_s = \big[A\otimes I_t\otimes C\big]\breve{B}_t Z
	\end{equation}
	and
	\begin{equation}
		\label{eq:left amp boom eq}
		\breve{B}_s^T\big[(I_n\otimes W)(A\otimes I_t)\otimes C\big] = W\breve{B}_t^T\big[A\otimes I_t\otimes C\big].
	\end{equation}
	If, in addition, $J,K\in \cM_{ns}$, then
	\begin{equation}
		\label{eq:double amp boom eq}
		\breve{B}_s^T\big[(I_n\sot J)(A\sot I_s)(I_n\sot K) \sot C\big]\breve{B}_s = J\breve{B}_s^T\big[A\otimes I_s\otimes C\big]\breve{B}_sK.
	\end{equation}	
	
	Furthermore, the ampliated boomerang matrix satisfies the ampliated versions of Equations~\eqref{eq:Boomerang switch 1} and \eqref{eq:Boomerang switch 2}.
\end{proposition}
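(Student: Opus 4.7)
The plan is to prove the three displayed identities in order, then observe that the ampliated versions of \eqref{eq:Boomerang switch 1} and \eqref{eq:Boomerang switch 2} fall out as special cases. The main technical tool is the explicit action formula
\[
\breve{B}_r(e_a \otimes f_b) = \sum_{i=1}^n e_i \otimes e_a \otimes f_b \otimes e_i,
\]
valid for standard basis vectors $e_a \in \mathbb{C}^n$ and $f_b \in \mathbb{C}^r$. This follows immediately from the definition of $\breve{B}_r$, the fact that $\cQ_{n,r}$ acts on simple tensors as $e_a \otimes f_b \mapsto f_b \otimes e_a$, and the computation $E_{ij}e_a = \delta_{ja}\, e_i$.

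To prove \eqref{eq:right amp boom eq}, I apply both sides to a generic basis vector $e_a \otimes f_b \in \mathbb{C}^{ns}$ and expand. Writing $Z(e_a \otimes f_b) = \sum_{c,d} z_{(c,d),(a,b)}\, e_c \otimes f_d$ and feeding the action formula above into the left-hand side, then interpreting $(A\otimes I_t)(I_n\otimes Z)\otimes C$ as $A$ on the first two tensor slots, $(I_n\otimes Z)$ acting on the $\mathbb{C}^n\otimes\mathbb{C}^s$ middle portion, and $C$ on the last slot, the left-hand side collapses to
\[
\sum_{i,c,d} z_{(c,d),(a,b)}\, A(e_i \otimes e_c) \otimes f_d \otimes C e_i.
\]
A parallel expansion of the right-hand side, using the action formula for $\breve{B}_t$, produces the same sum. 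Identity \eqref{eq:left amp boom eq} then follows from \eqref{eq:right amp boom eq} by transposition: taking the transpose of \eqref{eq:right amp boom eq} and relabelling $A^T \mapsto A$, $C^T \mapsto C$, $Z^T \mapsto W$ (so that $W \in \cM_{ns\times nt}$) recovers \eqref{eq:left amp boom eq} verbatim.

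For \eqref{eq:double amp boom eq}, I combine the two previous identities. Using the mixed-product property of Kronecker products, factor
\[
(I_n \otimes J)(A \otimes I_s)(I_n \otimes K) \otimes C = \bigl[(I_n \otimes J)(A \otimes I_s) \otimes C\bigr]\bigl[(I_n \otimes K) \otimes I_n\bigr].
\]
Applying \eqref{eq:left amp boom eq} with $W = J$ and $t = s$ to $\breve{B}_s^T$ times the left factor extracts $J$ on the outside. The specialization $A = I_{n^2}$, $C = I_n$, $Z = K$, $s = t$ of \eqref{eq:right amp boom eq} yields $(I_n \otimes K \otimes I_n)\breve{B}_s = \breve{B}_s K$, which extracts $K$ from the right. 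Putting these two steps together produces \eqref{eq:double amp boom eq}.

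The final clause of the proposition, namely that $\breve{B}_r$ satisfies the ampliated analogues of \eqref{eq:Boomerang switch 1} and \eqref{eq:Boomerang switch 2}, follows either by the same basis-vector expansion or as specializations of \eqref{eq:right amp boom eq}--\eqref{eq:double amp boom eq} with appropriate choices of $A$, $Z$, $C$. The main source of friction throughout is the careful tracking of the four tensor slots in $\mathbb{C}^{n^3 r}$ together with the auxiliary $s$- or $t$-factors introduced by $Z$, $W$, $J$, $K$; the action formula above makes these computations essentially mechanical once the index bookkeeping is set up correctly.
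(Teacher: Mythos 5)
Your proof is correct and follows essentially the same route as the paper: both establish \eqref{eq:right amp boom eq} by linearity over a spanning set (you by applying both sides to basis vectors $e_a\otimes f_b$ via the explicit action formula $\breve{B}_r(e_a\otimes f_b)=\sum_i e_i\otimes e_a\otimes f_b\otimes e_i$; the paper by plugging in matrix units $Z=E_{pq}\otimes\cE_{k\ell}$ directly into the definition of $\breve{B}_s$), then deduce \eqref{eq:left amp boom eq} by transposition and \eqref{eq:double amp boom eq} by combining the two. If anything, your explicit mixed-product factorization in the proof of \eqref{eq:double amp boom eq} is slightly more careful than the paper's single displayed line, which appears to contain a dimensional typo.
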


\begin{proof}
	Suppose $A\in \cM_{n^2}$ and $C\in \cM_n$.
	Let $\cE_{k\ell}$ be the $t\times s$ matrix with a 1 in the $k,\ell$-entry and zeros elsewhere.
	We prove Equation~\eqref{eq:right amp boom eq} with $E_{pq}\otimes \cE_{k\ell}$ first:
	\begin{align*}
		[(A\sot I_t)(I_n\sot E_{pq}\sot \cE_{k\ell})\sot C]\breve{B}_s
			&= {\textstyle\sum}_{ij} \big[A(I_n\sot E_{pq})\sot \cE_{k\ell}\sot C\big]
				\big[e_i\sot e_j\sot I_s\sot E_{ij}\big]\cQ_{n,s}\\
			&= {\textstyle\sum}_{ij} [A(e_i\sot E_{pq}e_j)\sot \cE_{k\ell}\sot CE_{ij}]\cQ_{n,s}\\
			&= {\textstyle\sum}_{i}  [A(e_i\sot e_p)\sot \cE_{k\ell}\sot CE_{iq}]\cQ_{n,s}\\
			&= {\textstyle\sum}_{ij} [A(e_i\sot e_j)\sot \cE_{k\ell}\sot CE_{ij}E_{pq}]\cQ_{n,s}\\
			&= {\textstyle\sum}_{ij}(A\sot I_t\sot C)\left(e_i\sot  e_j \sot I_t\sot E_{ij}\right)(\cE_{k\ell}\sot E_{pq})\cQ_{n,s}\\
			&= (A\sot I_t\sot C)(\breve{B}_t\cQ_{n,t}^T)(\cE_{k\ell}\sot E_{pq})\cQ_{n,s}\\
			&= (A\sot I_t\sot C)(\breve{B}_t)(E_{pq}\sot \cE_{k\ell}).
	\end{align*}
	Thus, with linearity and by taking adjoints we have Equations~\eqref{eq:right amp boom eq} and \eqref{eq:left amp boom eq}.
	
	For Equation \eqref{eq:double amp boom eq}, set $t = s$ and combine Equations \eqref{eq:right amp boom eq} and \eqref{eq:left amp boom eq}:
	\begin{align*}
		\breve{B}_s^T\big[A\otimes JK\otimes C\big]\breve{B}_s 
			&= J\breve{B}_s^T[A\otimes I_s \otimes C]\breve{B}_s K.
	\end{align*}
	
	Finally, the ampliated versions of Equations~\eqref{eq:Boomerang switch 1} and \eqref{eq:Boomerang switch 2}
	follow readily from adapting their proofs.
\end{proof}

\begin{proposition}
	\label{prop:amp PX - XHX}
	Suppose $X\in \cM_n^d$ is a row contraction and $H\in \cM_n$.
	An ampliated version of Lemma~\ref{lem:Choi kron prod XHX} is satisfied:
	\[
		\breve{B}_s^T\left[P_X \otimes I_s\otimes \left(H - {\textstyle\sum}_{i=1}^d X_iHX_i^*\right)\right]\breve{B}_s = H\otimes I_s.
	\]
\end{proposition}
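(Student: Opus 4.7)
The plan is to ampliate the proof of Lemma~\ref{lem:Choi kron prod XHX} slot-by-slot, carrying the extra $I_s$ inertly through the argument and replacing $\breve{B}$ by $\breve{B}_s$. The key point is that the Boomerang-switch manipulations driving the original proof all have ampliated analogues, as asserted at the end of Proposition~\ref{prop:ampliated props}.

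First, for each $i$ I would use the ampliated form of the Boomerang switch (with a middle matrix $H$ threaded through the third slot) to rewrite
\[
	\breve{B}_s^T\bigl[P_X\otimes I_s\otimes X_i H X_i^*\bigr]\breve{B}_s
	= \breve{B}_s^T\bigl[(X_i^T\otimes I_n)\,P_X\,(\bar X_i\otimes I_n)\otimes I_s\otimes H\bigr]\breve{B}_s,
\]
exactly as in the non-ampliated proof. Subtracting this from $\breve{B}_s^T(P_X\otimes I_s\otimes H)\breve{B}_s$, summing over $i=1,\dots,d$, and invoking Lemma~\ref{lem:P sum X to Choi} collapses the first factor to the Choi matrix, giving
\[
	\breve{B}_s^T\Bigl[P_X\otimes I_s\otimes \bigl(H - {\textstyle\sum_{i=1}^d} X_iHX_i^*\bigr)\Bigr]\breve{B}_s
	= \breve{B}_s^T\bigl[\Choi_n\otimes I_s\otimes H\bigr]\breve{B}_s.
\]

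To finish I would evaluate the right-hand side in matrix units. Substituting $\breve{B}_s=(\sum_{ij}e_i\otimes e_j\otimes I_s\otimes E_{ij})\cQ_{n,s}$ and $\Choi_n=\sum_{pq}E_{pq}\otimes E_{pq}$, the identities $e_{i_1}^TE_{pq}e_{i_2}=\delta_{i_1 p}\delta_{q i_2}$ and $e_{j_1}^TE_{pq}e_{j_2}=\delta_{j_1 p}\delta_{q j_2}$ force $i_1=j_1=p$ and $i_2=j_2=q$. The surviving double sum is $I_s\otimes\bigl(\sum_{p,q}E_{pp}HE_{qq}\bigr)=I_s\otimes H$, conjugated on either side by $\cQ_{n,s}$. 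By the defining property $\cQ_{n,s}(H\otimes I_s)\cQ_{n,s}^T=I_s\otimes H$ the conjugate equals $H\otimes I_s$, which is the claimed identity.

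The one spot that needs more than mindless copying is the first display: one must verify the Boomerang-switch identity with $H$ sandwiched in the middle, i.e.\ $\breve{B}_s^T(A\otimes I_s\otimes CHD)\breve{B}_s=\breve{B}_s^T((C^T\otimes I)A(D^T\otimes I)\otimes I_s\otimes H)\breve{B}_s$. This is not literally equation~\eqref{eq:Boomerang switch 2} but follows from two applications of \eqref{eq:right amp boom eq} (one on either side of $H$) together with the ampliated form of~\eqref{eq:Boomerang switch 1}, in exactly the way~\eqref{eq:Boomerang switch 2} was derived at the non-ampliated level. Establishing this inserted-middle variant is the main (minor) obstacle; after that, the argument is a direct transcription of the proof of Lemma~\ref{lem:Choi kron prod XHX}, with the permutation $\cQ_{n,s}$ being the only extra bookkeeping.
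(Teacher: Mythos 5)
Your proposal is correct and carries out exactly the adaptation the paper's one-line proof (``This follows from adapting the proof of Lemma~\ref{lem:Choi kron prod XHX}'') refers to; the explicit matrix-unit computation and the $\cQ_{n,s}$-conjugation at the end are right, and you correctly flag that what is really used is an inserted-middle variant of \eqref{eq:Boomerang switch 2}, with $H$ left in the last slot, rather than the equation as literally stated. One small citation slip: that inserted-middle identity comes from applying the ampliated form of \eqref{eq:Boomerang switch 1} on each side of $H$ (pulling $X_i$ and $X_i^*$ from the fourth slot to the first), not from \eqref{eq:right amp boom eq}, which concerns moving an $ns\times ns$ block through $\breve{B}_s$ and is not the relevant tool here; this does not affect the correctness of the argument.
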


\begin{proof}
	This follows from adapting the proof of Lemma~\ref{lem:Choi kron prod XHX}.
\end{proof}

\section{Popescu Mini-Dilations}
\label{sec:mini-dilations}
Once more, we suppose $X = (X_1,\dots, X_d)\in \cM_n^d$ is a row-contraction.
Set $\Delta_X = I_n - \sum_{i=1}^d X_iX_i^*$ and observe that both $P_X$ and $\Delta_X$ are self-adjoint and positive semi-definite.

Define $\mathcal{V}_X = (P_X^{1/2}\sot I_n \sot \Delta_X^{1/2})\breve{B}_n \in \cM_{n^4\times n^2}$ and remark that 
	Proposition~\ref{prop:amp PX - XHX} with $H = I_n$ implies $\cV_X$ is an isometry: $\cV_X^*\cV_X = I_{n^2}$.
	
Recall $\finv{P_X}$ is the pseudoinverse of $P_X^{1/2}$ and $Q_X$ is the projection onto $\vecc(\alg_X)$, where $\alg_X$ is the unital algebra generated by $X_1,\dots, X_d$.

%
%

\begin{lemma}
	\label{lem:alg projection}
	If $W\in \alg_X$ then
	\[
		Q_X(I_n\sot W)P_X = (I_n\sot W)P_X \quad \text{ and } \quad P_X(I_n\sot W^*)Q_X = P_X(I_n\sot W^*).
	\]	
\end{lemma}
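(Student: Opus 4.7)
My plan is to reduce both statements to the fact, already recorded in the paper (from \cite{pascoe-2019-alg} and the definition of $Q_X$), that
\[
\ran(P_X) = \vecc(\alg_X) = \ran(Q_X),
\]
combined with the standard vec-identity $\vecc(AMB) = (B^T\otimes A)\vecc(M)$ specialized to $(I_n\sot W)\vecc(M) = \vecc(WM)$.

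For the first equality, I would argue one column at a time. Let $\eta \in \C^{n^2}$ be arbitrary. Since $\ran(P_X) = \vecc(\alg_X)$, there exists some $M_\eta \in \alg_X$ with $P_X\eta = \vecc(M_\eta)$. Then the vec-identity gives
\[
(I_n\sot W)P_X\eta \;=\; (I_n\sot W)\vecc(M_\eta) \;=\; \vecc(WM_\eta).
\]
Because $\alg_X$ is an algebra and $W, M_\eta \in \alg_X$, the product $WM_\eta$ again lies in $\alg_X$, so $\vecc(WM_\eta) \in \vecc(\alg_X) = \ran(Q_X)$. Since $Q_X$ is the orthogonal projection onto this range, it fixes the vector, i.e.\ $Q_X(I_n\sot W)P_X\eta = (I_n\sot W)P_X\eta$. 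As $\eta$ was arbitrary, the first identity follows.

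The second identity I would obtain by taking adjoints of the first. Both $P_X$ and $Q_X$ are self-adjoint, and $(I_n\sot W)^* = I_n\sot W^*$, so
\[
P_X(I_n\sot W^*)Q_X \;=\; \bigl(Q_X(I_n\sot W)P_X\bigr)^* \;=\; \bigl((I_n\sot W)P_X\bigr)^* \;=\; P_X(I_n\sot W^*).
\]

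There is no real obstacle here; the content of the lemma is entirely packaged in the characterization $\ran(P_X) = \vecc(\alg_X)$ (cited from \cite{pascoe-2019-alg}) together with the algebraic closure of $\alg_X$ under multiplication. The only thing to be slightly careful about is to invoke the vec-identity in the correct orientation so that left multiplication by $W$ on the matrix side corresponds to $I_n \sot W$ on the vector side, rather than $W^T \sot I_n$; this is exactly how the identities are stated and used throughout Section~\ref{sec:nc pick and matrix PX}.
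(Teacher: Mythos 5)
Your proof is correct and is essentially identical to the paper's: both arguments act on an arbitrary vector, use $\ran(P_X)=\vecc(\alg_X)$ to write $P_X\eta=\vecc(M_\eta)$ with $M_\eta\in\alg_X$, apply the vec-identity $(I_n\sot W)\vecc(M)=\vecc(WM)$, invoke closure of $\alg_X$ under multiplication so that $Q_X$ fixes $\vecc(WM_\eta)$, and finish the second identity by taking adjoints.
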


\begin{proof}
	We begin by taking $v\in \cM_{n^2}$ and recalling that $\ran(P_X) = \vecc(\alg_X)$, hence $P_X v = \vecc(V)$, for some $V\in \alg_X$.
	Moreover, since $W$ is also in $\alg_X$, it follows that $WV\in \alg_X$ and $Q_X\vecc(WV) = \vecc(WV)$.
	Thus,
	\begin{align*}
		Q_X(I_n \sot W)P_X vsec:failure of column-row
			&= Q_X(I_n\sot W)\vecc(V) = Q_X\vecc(WV)\\
			&= \vecc(WV) = (I_n\sot W)\vecc(V)\\
			& = (I_n\sot W)P_X v,
	\end{align*}
	allowing us to conclude that $Q_X(I_n \sot W)P_X = (I_n \sot W)P_X$.
	Taking adjoint shows
	\[
		P_X(I_n\sot W^*)Q_X = P_X(I_n\sot W^*).
	\]
\end{proof}

\begin{theorem}
	\label{thm:mini-dilation}
	Suppose $\alpha,\beta\in \C\fralg{x}$.
	If $\tilde{X} = \finv{P_X}(I_n\otimes X)P_X^{1/2}$ then
	\[
		\cV_X^*\big(\alpha(\tilde{X})\beta(\tilde{X})^*\otimes I_{n^2}\big)\cV_X = \alpha(X)\beta(X)^*\otimes I_n.
	\]
\end{theorem}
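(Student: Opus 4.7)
My plan is to reduce the identity to the boomerang sandwich machinery of Section~\ref{sec:boomerang matrix}. The pivotal step is the ``pseudo-similarity after sandwiching'' identity
\begin{equation}
P_X^{1/2}\,\alpha(\tilde X)\beta(\tilde X)^*\,P_X^{1/2} \;=\; (I_n\otimes \alpha(X))\,P_X\,(I_n\otimes \beta(X)^*).\tag{$\star$}
\end{equation}
To prove $(\star)$ I would first verify, by induction on the length of $w\in\fax$, that for every nonempty word $w$ one has $\tilde X^w = \finv{P_X}(I_n\otimes X^w)P_X^{1/2}$. The inductive step requires sliding $Q_X = P_X^{1/2}\finv{P_X}$ past a factor of $I_n\otimes X_j$, which is legal because $\vecc(X_j V) = (I_n\otimes X_j)\vecc(V)$ shows that $I_n\otimes X_j$ preserves $\ran Q_X = \vecc(\alg_X)$; hence $Q_X(I_n\otimes X_j)P_X^{1/2} = (I_n\otimes X_j)P_X^{1/2}$. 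The empty word is the only snag, since $\tilde X^\emptyset = I$ while $\finv{P_X}(I_n\otimes I)P_X^{1/2} = Q_X\neq I$. But this discrepancy is swallowed by the outer $P_X^{1/2}$'s: a direct check (combined with Lemma~\ref{lem:alg projection} giving $Q_X(I_n\otimes X^w)P_X = (I_n\otimes X^w)P_X$ and its adjoint) yields
\[
P_X^{1/2}\,\tilde X^w(\tilde X^v)^*\,P_X^{1/2} = (I_n\otimes X^w)\,P_X\,(I_n\otimes (X^v)^*)
\]
for every pair of words $w,v$ (empty or not). Linearity in $\alpha$ and $\beta$ then delivers $(\star)$.

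With $(\star)$ in hand, I would unfold $\cV_X = (P_X^{1/2}\otimes I_n\otimes \Delta_X^{1/2})\breve{B}_n$ and commute the central $\alpha(\tilde X)\beta(\tilde X)^*\otimes I_{n^2} = \alpha(\tilde X)\beta(\tilde X)^*\otimes I_n\otimes I_n$ through the outer tensor factors to obtain
\[
\cV_X^*\bigl(\alpha(\tilde X)\beta(\tilde X)^*\otimes I_{n^2}\bigr)\cV_X \;=\; \breve{B}_n^{\,*}\Bigl(\bigl[(I_n\otimes \alpha(X))P_X(I_n\otimes \beta(X)^*)\bigr]\otimes I_n\otimes \Delta_X\Bigr)\breve{B}_n.
\]
Setting $J = \alpha(X)\otimes I_n$ and $K = \beta(X)^*\otimes I_n$, the inner matrix factors as $(I_n\otimes J)(P_X\otimes I_n)(I_n\otimes K)\otimes \Delta_X$, and the double-ampliated boomerang identity~\eqref{eq:double amp boom eq} from Proposition~\ref{prop:ampliated props} peels $J$ and $K$ out of the sandwich:
\[
(\alpha(X)\otimes I_n)\,\breve{B}_n^{\,*}(P_X\otimes I_n\otimes \Delta_X)\breve{B}_n\,(\beta(X)^*\otimes I_n).
\]
Proposition~\ref{prop:amp PX - XHX} with $H = I_n$ (noting $\Delta_X = I_n - \sum X_iI_nX_i^*$) collapses the middle boomerang to $I_n\otimes I_n = I_{n^2}$, leaving precisely $\alpha(X)\beta(X)^*\otimes I_n$.

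The hard part is $(\star)$. The bookkeeping around the empty word is more delicate than it looks because $\tilde X$ is not literally similar to $I_n\otimes X$: the ``similarity'' only holds modulo the projection $Q_X$, and one must leverage both the unitality of $\alg_X$ and its invariance under left multiplication by each $X_i$ (packaged in Lemma~\ref{lem:alg projection}) to conclude that this projection is invisible between the flanking $P_X^{1/2}$'s. Once $(\star)$ is established, the remainder of the proof is pure manipulation with the boomerang identities assembled in Section~\ref{sec:boomerang matrix}.
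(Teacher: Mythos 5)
Your proposal is correct and follows essentially the same route as the paper's proof: both hinge on establishing the sandwich identity $P_X^{1/2}\alpha(\tilde X)\beta(\tilde X)^*P_X^{1/2}=(I_n\otimes\alpha(X))P_X(I_n\otimes\beta(X)^*)$ via Lemma~\ref{lem:alg projection}, then unwinding with the ampliated boomerang identities of Propositions~\ref{prop:ampliated props} and~\ref{prop:amp PX - XHX}. Your explicit accounting for the empty word is, if anything, slightly more careful than the paper's bare assertion that $\alpha(\tilde X)=\finv{P_X}(I_n\otimes\alpha(X))P_X^{1/2}$, which literally holds only after sandwiching by $P_X^{1/2}$ absorbs the discrepancy between $\tilde X^\varnothing=I$ and $\finv{P_X}P_X^{1/2}=Q_X$.
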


\begin{proof}
	Observe that Lemma~\ref{lem:alg projection} implies that $\alpha(\tilde{X}) = \finv{P_X}(I_n\otimes \alpha(X))P_X^{1/2}$.
	Take $W,Z\in \alg_X$ and set 
	\[
		\tilde{W} = \finv{P_X}(I_n\otimes W)P_X^{1/2} \quad \text{ and } \quad \tilde{Z} = \finv{P_X}(I_n\otimes Z)P_X^{1/2}.
	\]
	Note $\tilde{Z}^* = P_X^{1/2}(I_n\sot Z^*)\finv{P_X}$ and applying Lemma~\ref{lem:alg projection} once more implies
	\begin{align*}
		P_X^{1/2}\tilde{W}\tilde{Z}^*P_X^{1/2} 
			&= P_X^{1/2}\finv{P_X}(I_n\sot W)P_X^{1/2}P_X^{1/2}(I_n\sot Z^*)\finv{P_X}P_X^{1/2}\\
			&= Q_X(I_n\sot W)P_X(I_n \sot Z^*)Q_X \\
			&= (I_n \sot W)P_X (I_n \sot Z^*).
	\end{align*}
	Using Proposition~\ref{prop:ampliated props} we have the following chain of equalities:
	\begin{align*}
		\cV_X^*(\tilde{W}\tilde{Z}^*\sot I_{n^2})\cV_X 
			&= \left(\left(P_X^{1/2}\sot I_n \sot \Delta_X^{1/2}\right)\breve{B}_n\right)^*(\tilde{W}\tilde{Z}^*\sot I_{n^2})
				\left(\left(P_X^{1/2}\sot I_n \sot \Delta_X^{1/2}\right)\breve{B}_n\right)\\
			&= \breve{B}_n^T\left(P_X^{1/2}\tilde{W}\tilde{Z}^*P_X^{1/2}\sot I_n\sot \Delta_X\right)\breve{B}_n\\
			&= \breve{B}_n^T\big((I_n\sot W)P_X(I_n\sot Z^*)\sot I_n \sot \Delta_X\big)\breve{B}_n\\
			&= \breve{B}_n^T\big((I_n\sot W\sot I_n)(P_X\sot I_n)(I_n\sot Z^*\sot I_n)\sot \Delta_X\big)\breve{B}_n \\
			&= (W\sot I_n)\breve{B}_n^T\left(P_X\sot I_n\sot \Delta_X\right)\breve{B}_n(Z^*\sot I_n) \\
			&= WZ^*\sot I_n.
	\end{align*}
	Since $\alpha(X),\beta(X)\in \alg_X$, setting $W = \alpha(X)$ and $Z = \beta(X)$ finishes the proof.

\end{proof}

%
%
%

\section{Proof of Theorem \ref{thm:row-col fails for Fock}}
\label{sec:failure of column-row}

\begin{proof}
It suffices to prove the theorem in the case $d=2$. Fix $n$ and choose a pair of $n\times n$ matrices $X=(X_1, X_2)$ such that 
$X_1X_1^*+X_2X_2^* =I_n$ and $X_1, X_2$ generate all of $\cM_n$ as a unital algebra. (A construction of such a pair is given in the next 
section, see Example \ref{group_examples_section}(a).) Consider the $n\times n$ matrices
\[
Y_1 =E_{11}, \quad Y_2=E_{12}, \quad \dots, \quad Y_n=E_{1n}.
\]
Put
\[
Y_{col} =\begin{bmatrix} Y_1 \\ Y_2 \\ \vdots \\ Y_n\end{bmatrix}, \quad Y_{row} =\begin{bmatrix} Y_1 & Y_2 & \cdots & Y_n\end{bmatrix},
\]
then $\|Y_{col}\|=\|\sum_{i=1}^n Y_i^*Y_i\|^{1/2}=1$ and $\|Y_{row}\|=\|\sum_{i=1}^n Y_iY_i^*\|^{1/2}=\sqrt{n}$. 

Let $0<\epsilon<1$. By Theorem~\ref{thm:ANPMainTheorem}, for all $t$ sufficiently close to $1$ we have both
\[
\|Y_{col}\|_{\NP{tX}} <(1+\epsilon) \quad \text{and} \quad \|Y_{row}\|_{\NP{tX}} >(1-\epsilon) \sqrt{n}.
\]
Fix such a $t$. By the definition of the $NP(tX)$ norm, there exists an $n\times 1$ column of elements of $\mathcal L_2$
\[
F_{col}=\begin{bmatrix} f_1 \\ f_2 \\ \vdots \\ f_n\end{bmatrix}
\]
such that $\|F_{col}\|_\infty <1+\epsilon$ and $F_{col}(tX)=Y_{col}$, that is, $f_i(tX)=Y_i$ for each $i=1, \dots ,n$, and
 \begin{equation}\label{col-row-proof-display-1}
 \|\sum_{i=1}^n f_i^*f_i\|^{1/2}<1+\epsilon.  
 \end{equation}
 If we take these $f_1, \dots , f_n$ and form the row
\[
F_{row} =\begin{bmatrix} f_1 & f_2 & \cdots & f_n\end{bmatrix}
\]
then $F_{row}$ solves the interpolation problem $F_{row}(tX)=Y_{row}$, and hence, again by the definition of the $NP(tX)$ norm, we must have 
\begin{equation}\label{col-row-proof-display-2}
\|\sum_{i=1}^n f_i f_i^*\|^{1/2} =\|F_{row}\|_\infty \geq  \|Y_{row}\|_{\NP{tX}} >(1-\epsilon) \sqrt{n}.
\end{equation}
Comparing (\ref{col-row-proof-display-1}) and (\ref{col-row-proof-display-2}), and keeping in mind that $\epsilon$ was arbitrary, we conclude 
that $C_n\geq \sqrt{n}$.  As noted earlier, the reverse inequality always holds, so the theorem is proved. 

\end{proof}

%
%
%

\section{Examples}\label{sec:examples}

As we have seen, a central role is played by $d$-tuples of $n\times n$ matrices $X=(X_1, \dots, X_d)$ with the following two properties:
\begin{itemize}
\item the row $X$ is a {\em co-isometry}, i.e. $\sum_{i=1}^d X_iX_i^* =I_n$, and
\item $X$ is {\em irreducible} in the sense that $\alg_X=\cM_n$. 
\end{itemize}

We now give several examples of such systems $X$; the first is important for the proof of Theorem~\ref{thm:ANPMainTheorem} in the sense that 
it shows that such systems exist for $d=2$ and all $n$ (hence for all $d$ and $n$). 
\subsection{Irreducible representations of groups}\label{group_examples_section}
\begin{enumerate}
\item[a)] Let $d=2$ and let $X = (\frac{1}{\sqrt{2}}S, \frac{1}{\sqrt{2}}M)$ where $S$ is the cyclic permutation matrix and $M$ is the discrete Fourier transform of $S$. That is, 
	\[
	Se_{i} = e_{i+1 \pmod{n}}, Me_i = \omega^{i} e_i
	\] 
where $\omega$ is an n-th root of unity. Since $S$ and $M$ are unitary, it is trivial that $X$ is a row co-isometry. Again since $S$ and $M$ 
are unitary, it follows that the algebra they generate is a $*$-algebra, and it is straightforward to check that the only matrices commuting 
with both $S$ and $M$ are scalar multiples of the identity. Thus, $\alg_X=\cM_n$. 
	\item[b)] More generally, given any group $G$ with generators $g_1, \ldots g_d,$ we can consider any irreducible unitary representation 
	$\pi:G\to \cM_n$ and let $X_i = w_i\pi(g_i)$, $i=1, \dots , d$ where the $w_i$ are nonzero and $\sum |w_i|^2=1.$ (Note that, in the 
	previous example, $M$ and $S$ generate a group of cardinality $n^3.$) As before the algebra generated by the $X_i$ is a $*$-algebra, and 
	hence the irreducibility of the representation implies that $\alg_X =\cM_n$. 
\end{enumerate}

\subsection{Many variable example: the Choi point} \label{choi_example_section} When $d=n^2$, we can construct a special $d$-tuple of 
$n\times n$ matrices $X_1, \dots , X_d$, for which it is easy to verify the conclusion of Theorem~\ref{thm:ANPMainTheorem} directly, without 
appeal to the machinery of the quantum Perron-Frobenius theorem. (In fact this is the context in which the failure of the column-row property 
for $\mathcal L_d$ was originally discovered. In particular, using the following lemma and imitating the proof of 
Theorem~\ref{thm:ANPMainTheorem}, one can show that for $\mathcal L_{n^2}$ the column-row constant $C_n$ is $\sqrt{n}$. Since all the 
$\mathcal L_d$ embed completely isometrically in $\mathcal L_2$, one concludes $C_n=\sqrt{n}$ in $\mathcal L_2$, for all $n$.)
\begin{lemma}\label{lem:choi_point_calculuation}
	Fix $n>1$ and let $d=n^2$.  We consider the $n^2$ matrices $X_{i,j}$, each of size $n\times n$,
\[	
	X _{i,j}= \frac{1}{\sqrt{n}}E_{ij}, \quad {1 \leq i,j\leq n} 
\]
arranged into a row (say, by listing the subscripts $(ij)$ in lexicographic order). Then	$X$ is a row co-isometry and 
\[
\lim_{t\nearrow 1}\frac{1-t^2}{t^2} P_{tX} =\frac{1}{n} I_{n^2}.
\]
Hence for any $Y\in \cM_{s\times t}\otimes \cM_{n^2}$,
\[
\|Y\|_{\ANP{X}} =\|Y\|.
\]
\end{lemma}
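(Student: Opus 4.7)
The plan is to compute $P_{tX}$ in closed form by exploiting the fact that, at the Choi point, the operator $T=\sum_{k}\overline{X_k}\otimes X_k$ reduces to a rank-one projection, so the geometric series $\sum_n (t^2T)^n$ collapses to an explicit expression and one never needs to invoke the quantum Perron-Frobenius theorem. Concretely, I first check the row co-isometry condition: $\sum_{i,j}X_{ij}X_{ij}^{*}=\frac{1}{n}\sum_{i,j}E_{ij}E_{ji}=\frac{1}{n}\sum_{i,j}E_{ii}=I_n$. Next I compute
\[
T=\sum_{i,j}\overline{X_{ij}}\otimes X_{ij}=\frac{1}{n}\sum_{i,j}E_{ij}\otimes E_{ij}=\frac{1}{n}\mathfrak{C}_n,
\]
and observe from the matrix unit multiplication $E_{ij}E_{k\ell}=\delta_{jk}E_{i\ell}$ that $\mathfrak{C}_n^{2}=n\mathfrak{C}_n$. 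Hence $\frac{1}{n}\mathfrak{C}_n$ is an (orthogonal) projection — in fact the rank-one projection onto the maximally entangled vector $\frac{1}{\sqrt n}\vecc(I_n)$.

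Because $T$ is a scalar multiple of a projection, $(I_{n^2}-t^{2}T)^{-1}$ admits a two-line inversion: $(I_{n^2}-t^{2}T)^{-1}=I_{n^2}+\frac{t^{2}}{1-t^{2}}\cdot\frac{1}{n}\mathfrak{C}_n$. Applying the $\psi$-involution and using $[I_{n^2}]^{\psi}=\mathfrak{C}_n$ together with the involution property $[\mathfrak{C}_n]^{\psi}=I_{n^2}$, I obtain
\[
P_{tX}=\mathfrak{C}_n+\frac{t^{2}}{n(1-t^{2})}\,I_{n^2}.
\]
Multiplying by $\frac{1-t^{2}}{t^{2}}$ and letting $t\nearrow 1$ kills the $\mathfrak{C}_n$ term and leaves $\frac{1}{n}I_{n^2}=\overline{W}\otimes I_n$ with $W=\frac{1}{n}I_n$ positive definite, which is the first claim of the lemma.

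For the norm statement I would then mimic the final step of the proof of Theorem~\ref{thm:ANPMainTheorem}: since $\alg_X=\cM_n$ (the $E_{ij}$ span $\cM_n$), we have $Q_X=I$ and $\finv{P_{tX}}=P_{tX}^{-1/2}$. Plugging the explicit $P_{tX}$ into Corollary~\ref{cor:NP norm formula} gives $\snorm{Y}_{\NP{tX}}=\snorm{(P_{tX}^{-1/2}\otimes I_s)(I_n\otimes Y)(P_{tX}^{1/2}\otimes I_t)}$; as $t\to 1$, after rescaling by $\frac{1-t^{2}}{t^{2}}$ (which cancels between the two factors), the conjugating matrices converge to $(\tfrac{1}{n}I_{n^{2}})^{\mp 1/2}\otimes I$, so the whole expression tends to $\snorm{I_n\otimes Y}=\snorm{Y}$, yielding $\snorm{Y}_{\ANP{X}}=\snorm{Y}$. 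There is no serious obstacle here — the entire content of the lemma is the collapse $T=\frac{1}{n}\mathfrak{C}_n$ into a rank-one projection, which makes every subsequent step a one-line manipulation rather than a spectral-theoretic argument.
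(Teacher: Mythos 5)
Your proof is correct and follows essentially the same route as the paper's: both verify the co-isometry directly, compute $T=\sum\overline{X_{ij}}\otimes X_{ij}=\frac{1}{n}\mathfrak{C}_n$, note the idempotence $T^2=T$ so the Neumann series collapses to $(I-t^2T)^{-1}=I+\frac{t^2}{1-t^2}T$, apply the $\psi$-involution using $[I_{n^2}]^\psi=\mathfrak{C}_n$ and $[\mathfrak{C}_n]^\psi=I_{n^2}$ to get $P_{tX}=\mathfrak{C}_n+\frac{t^2}{n(1-t^2)}I_{n^2}$, and then take the limit. The only difference is cosmetic: you derive $T^2=T$ from the matrix-unit identity $\mathfrak{C}_n^2=n\mathfrak{C}_n$ and identify $T$ as the rank-one projection onto the maximally entangled state, and you spell out the scalar-cancellation step in the norm limit, whereas the paper simply states $T^2=T$ and refers back to the proof of Theorem~\ref{thm:ANPMainTheorem} for the final norm conclusion.
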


\begin{proof}
It is straightforward to verify that $X$ is a row co-isometry.
	Now, let 
\[	
	T = \sum_{i,j=1}^n \overline{X_{i,j}}\otimes X_{i,j} = \frac{1}{n}\sum_{i,j=1}^n E_{ij} \otimes E_{ij} =\frac{1}{n}\Choi_n. 
\]	
Note $T^2 = T.$
	Moreover, $T^{\psi} = \frac{1}{n}I_{n^2}.$
	So, computing $P_{tX},$ we see that
	\begin{align*}
		P_{tX} 	&= [(I-t^2T)^{-1}]^{\psi} \\
			&= [I+\frac{t^2}{1-t^2}T]^\psi\\
			&= nT+\frac{t^2}{1-t^2}\frac{1}{n}I.
	\end{align*}
This proves the first claim of the lemma, and the second claim follows exactly as in the proof of Theorem~\ref{thm:ANPMainTheorem}.
\end{proof}

%
%
%

\section{Further remarks on the $\NP{X}$ norm and interpolating sequences}
\label{sec:remarks on NPX norm}

The Nevanlinna-Pick norm of a block matrix $Y$ at $X,$ denoted $\|Y\|_{\NP{X}},$ is the minimum block $H^\infty$ norm of a function $f$ satisfying the equation $f(X)=Y.$
We define the {\bf condition number} of $X,$ denoted $\kappa(X),$ by the formula
\[
	\kappa(X) = \inf_{Y \neq 0, Y\in \cM_{s\times t}\otimes \alg_X} \frac{\|Y\|_{\NP{X}}}{\|Y\|}.
\]
Note that, by definition, for any $Y\in \alg_X$,
	$$\|Y\| \leq \|Y\|_{\NP{X}} \leq \kappa(X)\|Y\|.$$
The two following fairly harmless assertions, which will be established momentarily, have somewhat explosive consequences:
\begin{enumerate}
	\item If $X$ is an irreducible row co-isometry, then $\lim_{t\rightarrow 1} \kappa(tX) = 1.$
	\item If $X_1$ is a row contraction and $X_2$ is an irreducible row co-isometry, then $$\lim_{t\rightarrow 1} \kappa(X_1\oplus tX_2) = \kappa(X_1).$$
\end{enumerate}
Together, they will be used to establish the following fact, which is somewhat surprising in light of the failure of the column-row property for the free semigroup algebras: :  {\em For any sequence of contractive target data, there is an interpolating sequence for that data such that the interpolating function
	can be chosen with norm less than or equal to $1.$}


In fact, in this case, one can actually choose the interpolating sequence based only on the sequence of norms of the target data, and their sizes.

Recall the {\bf elementary Pick matrix}: 
	\[P_X= [(I-\sum \overline{X_i}\sot X_i)^{-1}]^\psi.\]
Given a set $S \subset \cM_n,$ we denote its {\bf commutant} by $S',$ and we denote the set of {\bf invertible elements} in $S$ by $S^\times.$
\begin{corollary}
	\label{cor:NP norm formula intro}
	Suppose $Y\in \cM_{s\times t}\otimes \alg_X$. 
	Suppose $D\in \{I \otimes X\}'^\times.$
	Let $Q_{X,D} = (DP_XD)^{1/2}$
	Then,
		\[\norm{Y}_{\NP{X}} = \norm{(Q_{X,D}^\dagger\otimes I_s)(I_n\otimes Y)(Q_{X,D}\otimes I_t)}.\]
	(Here, $\dagger$ denotes the Moore-Penrose pseudoinverse.) 
\end{corollary}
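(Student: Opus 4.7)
My plan is to reduce directly to Corollary~\ref{cor:NP norm formula} by conjugating the defining positivity condition by $D$. First I would recall from Theorem~\ref{thm:effective NP} that $\|Y\|_{\NP{X}}\leq c$ if and only if
\[
P_X \otimes I_s \succeq c^{-2}(I_n\otimes Y)(P_X \otimes I_t)(I_n\otimes Y^*).
\]
Since $D$ is invertible, I would then conjugate both sides of this inequality by $D\otimes I_s$ on the left and $D\otimes I_s$ on the right; this is an equivalence, and the left-hand side becomes $DP_XD\otimes I_s=Q_{X,D}^2\otimes I_s$.

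The key calculation would be simplifying the resulting right-hand side. Because $D$ commutes with each $I_n\otimes X_i$, it commutes with $I_n\otimes W$ for every $W\in\alg_X$; under the natural hypothesis that $D$ is self-adjoint (essentially forced by the requirement that $(DP_XD)^{1/2}$ be a well-defined positive square root), taking adjoints yields that $D$ likewise commutes with $I_n\otimes W^*$. Regarding $I_n\otimes Y$ as an $s\times t$ block matrix with $n^2\times n^2$ blocks of the form $I_n\otimes Y_{ij}$ with $Y_{ij}\in\alg_X$, I would then establish the block-wise commutations
\[
(D\otimes I_s)(I_n\otimes Y) = (I_n\otimes Y)(D\otimes I_t), \quad (I_n\otimes Y^*)(D\otimes I_s) = (D\otimes I_t)(I_n\otimes Y^*).
\]
Pushing the $D$'s across $I_n\otimes Y$ and $I_n\otimes Y^*$ and combining them with the middle $P_X$ produces the equivalent inequality
\[
Q_{X,D}^2\otimes I_s \succeq c^{-2}(I_n\otimes Y)(Q_{X,D}^2\otimes I_t)(I_n\otimes Y^*).
\]

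To finish, I would replay the pseudoinverse step from the proof of Corollary~\ref{cor:NP norm formula}: multiplying on both sides by $Q_{X,D}^\dagger\otimes I_s$ and using the identity $Q_{X,D}^\dagger Q_{X,D}^2 Q_{X,D}^\dagger=\tilde Q$, where $\tilde Q$ is the orthogonal projection onto $\ran Q_{X,D}$, transforms the inequality into
\[
\tilde Q\otimes I_s \succeq c^{-2}({}^{Q}Y^{Q})({}^{Q}Y^{Q})^*, \qquad {}^{Q}Y^{Q}:=(Q_{X,D}^\dagger\otimes I_s)(I_n\otimes Y)(Q_{X,D}\otimes I_t).
\]
Since $\ran({}^{Q}Y^{Q})\subseteq\ran(Q_{X,D}^\dagger\otimes I_s)=\ran(\tilde Q\otimes I_s)$, this last inequality is equivalent to $\|{}^{Q}Y^{Q}\|\leq c$, and running the equivalences across all $c>0$ yields the claimed norm equality. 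The main obstacle is the commutation step: one must verify that both $D$ and $D^*$ commute with the blocks of $I_n\otimes Y$ and $I_n\otimes Y^*$, most cleanly achieved by taking $D$ self-adjoint (which is the interpretation consistent with $(DP_XD)^{1/2}$ being a positive square root). Once this is pinned down, the remainder is essentially a direct adaptation of the proof of Corollary~\ref{cor:NP norm formula}.
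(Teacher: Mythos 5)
Your argument is correct and is precisely the verification that the paper leaves to the reader: the paper only remarks that the corollary ``follows essentially trivially from Corollary~\ref{cor:NP norm formula},'' and your route---conjugating the criterion of Theorem~\ref{thm:effective NP} by $D\otimes I_s$, using the commutant hypothesis $D\in\{I\otimes X\}'$ to push $D$ across $I_n\otimes Y$, and then replaying the pseudoinverse step of Corollary~\ref{cor:NP norm formula}---is the intended derivation. Your observation that $D$ should be taken self-adjoint, so that $DP_XD$ is positive semidefinite (making $Q_{X,D}$ well defined) and so that two-sided multiplication by $D\otimes I_s$ is a Loewner-order isomorphism, is a genuine clarification the statement leaves implicit, and your reading of it is the correct one.
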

The corollary follows essentially trivially from Corollary~\ref{cor:NP norm formula} (by which it is sufficient to take $D=I$). However, especially in the case of multi-point Pick problems, $D$ can act as a pre-conditioner.
Note the necessity that $Y\in \cM_{s\times t}\otimes \alg_X,$ rather than merely $Y\in \cM_{s\times t}\otimes \cM_n.$

We define the {\bf effective condition number} of $X,$ denoted $\gamma(X),$ to be defined via the following formula,
\[
	\gamma(X) = \inf_{D \in (\{I \otimes X\}')^\times} \sqrt{\|DP_XD\|\|(DP_XD)^{-1}\|}.
\]
The effective condition number gives a bound on the condition number.
\begin{corollary}
  \label{cor:NP norm formula intro restate}
  For all $X\in \mathcal B^d$,
	\[\kappa(X)\leq\gamma(X).\]	
	
In particular, for all $Y\in \alg_X$ we have
        \[\norm{Y}_{\NP{X}} \leq \gamma(X)\norm{Y}.\]
\end{corollary}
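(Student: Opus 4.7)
The bound is essentially a direct consequence of Corollary~\ref{cor:NP norm formula intro} combined with submultiplicativity of the operator norm, so there is no real obstacle beyond bookkeeping. The approach is to fix an arbitrary invertible $D$ in the commutant of $I\otimes X$, apply the closed-form formula for $\|Y\|_{\NP{X}}$, estimate the norm of the resulting triple product, and then optimize over $D$.

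More concretely, for any $Y\in \cM_{s\times t}\otimes \alg_X$ and any admissible $D$, Corollary~\ref{cor:NP norm formula intro} gives
\[
\|Y\|_{\NP{X}} \;=\; \|(Q_{X,D}^\dagger\otimes I_s)(I_n\otimes Y)(Q_{X,D}\otimes I_t)\|,
\]
with $Q_{X,D}=(DP_XD)^{1/2}$. Since the operator norm is submultiplicative and invariant under ampliation by an identity, the right-hand side is bounded by $\|Q_{X,D}^\dagger\|\cdot \|Y\|\cdot \|Q_{X,D}\|$. The two outer factors simplify: $\|Q_{X,D}\|=\|DP_XD\|^{1/2}$ and $\|Q_{X,D}^\dagger\|=\|(DP_XD)^{-1}\|^{1/2}$, so their product is precisely the quantity $\sqrt{\|DP_XD\|\,\|(DP_XD)^{-1}\|}$ being minimized in the definition of $\gamma(X)$. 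Taking the infimum over $D$ yields $\|Y\|_{\NP{X}}\leq \gamma(X)\,\|Y\|$, which is the second displayed inequality of the corollary. Since this holds for all nonzero $Y\in\cM_{s\times t}\otimes\alg_X$, the corresponding supremum (over $Y$) of the ratio $\|Y\|_{\NP{X}}/\|Y\|$ is bounded by $\gamma(X)$, giving $\kappa(X)\leq\gamma(X)$.

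The only subtlety worth flagging is the case when $\alg_X\subsetneq \cM_n$, in which $P_X$ and hence $DP_XD$ are singular. Here the symbol $(DP_XD)^{-1}$ in the definition of $\gamma(X)$ must be read as the Moore--Penrose pseudoinverse (consistently with the $\dagger$ appearing in $Q_{X,D}^\dagger$), and with that reading the identification $\|Q_{X,D}^\dagger\|=\|(DP_XD)^{-1}\|^{1/2}$ remains valid as the reciprocal of the smallest nonzero singular value of $Q_{X,D}$. Beyond this minor technicality, the proof is essentially a one-line estimate, which matches the paper's characterization that the corollary follows ``essentially trivially'' from Corollary~\ref{cor:NP norm formula intro}.
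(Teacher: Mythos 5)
Your proof is correct and fills in exactly the argument the paper intends (the paper gives no explicit proof, just declares the corollary to follow from the formula of Corollary~\ref{cor:NP norm formula intro}): apply the formula for a fixed $D$, estimate by submultiplicativity and the spectral identities $\|Q_{X,D}\|=\|DP_XD\|^{1/2}$, $\|Q_{X,D}^\dagger\|=\|(DP_XD)^{-1}\|^{1/2}$, then optimize over $D$. One small note: the paper's definition of $\kappa(X)$ as written uses $\inf$, but (as you implicitly recognize by phrasing it as a supremum of the ratio) this is evidently a typo for $\sup$, since otherwise the accompanying inequality $\|Y\|_{\NP{X}}\leq\kappa(X)\|Y\|$ would not follow from the definition; your reading is the intended one.
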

We can also restate Theorem \ref{thm:ANPMainTheorem} in terms of condition numbers, which will be useful in the construction of interpolating sequences.
\begin{theorem}\label{ANPIntro} 
	Let $X = (X_1, \ldots, X_d)\in M_n(\mathbb{C})^d$ be a row co-isometry.
	If the algebra generated by $X_1, \ldots, X_d$ is all of $M_n(\mathbb{C})$ then
		$\lim \kappa(tX) = 1.$
	That is, for $Y\in \cM_{s\times t}\otimes \cM_n$ $\lim \|Y\|_{\NP{tX}} = \|Y\|.$
\end{theorem}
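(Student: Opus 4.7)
The plan is to recognize that this statement is simply the uniform-in-$Y$ strengthening of Theorem~\ref{thm:ANPMainTheorem}. Since $\alg_X = \cM_n$ implies $\alg_{tX} = \cM_n$ for all $t\in(0,1]$, the matrix $P_{tX}$ is strictly positive definite and the Moore--Penrose pseudoinverse appearing in Corollary~\ref{cor:NP norm formula} coincides with the actual inverse. Hence for every $Y\in\cM_{s\times t}\otimes\cM_n$,
\[
\|Y\|_{\NP{tX}} = \bigl\|(P_{tX}^{-1/2}\otimes I_s)(I_n\otimes Y)(P_{tX}^{1/2}\otimes I_t)\bigr\|.
\]
The right-hand side is a one-sided conjugation of $I_n\otimes Y$ by operators depending only on $X$ and $t$, so any convergence statement for those operators will automatically be uniform in $Y$.

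Next, I would reuse the key limit extracted inside the proof of Theorem~\ref{thm:ANPMainTheorem}, namely $\tfrac{1-t^2}{t^2}P_{tX}\to \overline{W}\otimes I_n$ as $t\nearrow 1$, where $W\in\cM_n$ is positive definite. Continuity of the square root and of inversion on positive-definite operators then gives, in operator norm,
\[
D_t:=\sqrt{\tfrac{1-t^2}{t^2}}\,P_{tX}^{1/2}\longrightarrow \overline{W}^{1/2}\otimes I_n=:D_*,\qquad C_t:=\sqrt{\tfrac{t^2}{1-t^2}}\,P_{tX}^{-1/2}\longrightarrow \overline{W}^{-1/2}\otimes I_n=:C_*.
\]
The scalar prefactors are reciprocals and cancel in the triple product, so $\|Y\|_{\NP{tX}} = \|(C_t\otimes I_s)(I_n\otimes Y)(D_t\otimes I_t)\|$. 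The critical observation is that $C_*$ and $D_*$ each have the form $(\cdot)\otimes I_n$, so they commute through the inner $I_n$ factor of $I_n\otimes Y$; this gives $(C_*\otimes I_s)(I_n\otimes Y)(D_*\otimes I_t)=I_n\otimes Y$, which has operator norm exactly $\|Y\|$.

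Finally, a standard add-and-subtract estimate yields
\[
\bigl|\|Y\|_{\NP{tX}}-\|Y\|\bigr|\leq \bigl(\|C_t-C_*\|\cdot\|D_t\|+\|C_*\|\cdot\|D_t-D_*\|\bigr)\|Y\|,
\]
so $\|Y\|_{\NP{tX}}/\|Y\|=1+o(1)$ \emph{uniformly} in nonzero $Y$. This forces $\lim_{t\to 1}\kappa(tX)=1$, and the pointwise statement for fixed $Y$ is then an immediate consequence (alternatively it is Theorem~\ref{thm:ANPMainTheorem} itself). The only delicate point is choosing the right scalar normalization so that $C_t$ and $D_t$ converge to \emph{commuting} limits of the form $(\cdot)\otimes I_n$; once this is arranged, uniformity is automatic because the conjugating operators do not depend on $Y$, and all of the genuine analytic content has already been absorbed into Theorem~\ref{thm:ANPMainTheorem} and the quantum Perron--Frobenius input that underlies it.
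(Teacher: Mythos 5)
Your proof is correct and follows the same route as the paper, which simply presents Theorem~\ref{ANPIntro} as a restatement of Theorem~\ref{thm:ANPMainTheorem}. The genuine added value in your write-up is that you make the uniformity in $Y$ explicit: the statement $\lim_{t\to 1}\kappa(tX)=1$ is, by definition of $\kappa$, a supremum over all nonzero $Y\in\cM_{s\times t}\otimes\alg_X$ (and over all block sizes $s,t$), so the pointwise limit $\lim_{t\to 1}\|Y\|_{\NP{tX}}=\|Y\|$ of Theorem~\ref{thm:ANPMainTheorem} does not by itself conclude the proof. Your renormalization $C_t=\sqrt{t^2/(1-t^2)}\,P_{tX}^{-1/2}$, $D_t=\sqrt{(1-t^2)/t^2}\,P_{tX}^{1/2}$ is exactly right: $P_{tX}^{\pm 1/2}$ individually diverge or vanish as $t\nearrow 1$, but the reciprocal scalar factors cancel in the triple product, and the normalized quantities converge (by continuity of the functional calculus on the positive-definite cone, using that $\overline{W}$ is positive definite via quantum Perron--Frobenius) to $\overline{W}^{\mp 1/2}\otimes I_n$. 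Because $C_t,D_t,C_*,D_*$ are fixed $n^2\times n^2$ matrices independent of $s,t$ and of $Y$, your add-and-subtract bound gives a coefficient depending only on $X$ and $t$, hence uniform over all block sizes; this is what closes the argument. One cosmetic remark: the paper's displayed definition $\kappa(X)=\inf(\cdot)$ is evidently a typo for $\sup$ (the subsequent inequality $\|Y\|_{\NP{X}}\leq\kappa(X)\|Y\|$ only makes sense with a supremum, and your proof correctly treats it as such).
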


%
%
%

\subsection{Interpolating sequences}
\label{sec:interpolating sequences}

We now do some basic constructions of interpolating sequences.
\begin{lemma}
	If $X_1$ is a row contraction and $X_2$ is an irreducible row co-isometry, then the spectral radius of $T=\sum (\overline{X_1)_i} \otimes (X_2)_i$ is less than $1.$
\end{lemma}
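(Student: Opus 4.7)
The plan is to identify $T$ with a linear endomorphism $\Phi$ of the matrix space $\cM_{n_2 \times n_1}$ (where $n_i$ denotes the common size of the matrices $(X_i)_j$) and then bound the operator norm of $\Phi$ by a constant strictly less than $1$.

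First, I would recognize that under the $\vecc$ isomorphism, multiplication by $T$ corresponds to the map
\[
\Phi(V) \;:=\; \sum_{i=1}^d (X_2)_i\, V\, (X_1)_i^{*},
\]
by virtue of the identity $(Z^T \otimes X)\vecc(Y) = \vecc(XYZ)$ applied with $Z = (X_1)_i^{*}$, $X=(X_2)_i$, $Y=V$. Since the spectral radius is basis-independent, $r(T) = r(\Phi)$, so it suffices to bound $\Phi$ in some operator norm.

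Next, I would factor $\Phi(V) = \mathbf{B}\mathbf{W}$, where $\mathbf{B} = [(X_2)_1\ \cdots\ (X_2)_d]$ is the row of $d$ blocks, and $\mathbf{W}$ is the column of $d$ blocks whose $i$-th block is $V(X_1)_i^{*}$. Then
\[
\Phi(V)\Phi(V)^{*} \;=\; \mathbf{B}\,\mathbf{W}\mathbf{W}^{*}\,\mathbf{B}^{*} \;\leq\; \|\mathbf{W}\mathbf{W}^{*}\|\, \mathbf{B}\mathbf{B}^{*} \;=\; \Big\|\sum_i (X_1)_i (V^{*}V) (X_1)_i^{*}\Big\|\, I_{n_2},
\]
where I use the co-isometry property $\mathbf{B}\mathbf{B}^{*} = \sum_i (X_2)_i (X_2)_i^{*} = I$, together with the fact that $\mathbf{W}\mathbf{W}^{*}$ and $\mathbf{W}^{*}\mathbf{W} = \sum_i (X_1)_i (V^{*}V) (X_1)_i^{*}$ share the same operator norm. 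Using $V^{*}V \leq \|V\|^2 I$ gives
\[
\sum_i (X_1)_i (V^{*}V) (X_1)_i^{*} \;\leq\; \|V\|^2 \sum_i (X_1)_i (X_1)_i^{*},
\]
so that $\|\Phi(V)\|^2 \leq c^2 \|V\|^2$ with $c^2 := \big\|\sum_i (X_1)_i (X_1)_i^{*}\big\| < 1$. Here I invoke the paper's standing convention (Section~\ref{sec:nc pick and matrix PX}) that ``row contraction'' is strict.

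Consequently $\|\Phi\|_{\mathrm{op}\to\mathrm{op}} \leq c < 1$, and since on a finite-dimensional space the spectral radius is dominated by any operator norm, $r(T) = r(\Phi) \leq c < 1$. The only step requiring care is the factorization together with the chain of operator inequalities; these are routine. I note that \emph{irreducibility} of $X_2$ is not actually used --- only the co-isometry property of $X_2$ and the strict contractivity of $X_1$ are invoked --- so this argument gives the lemma in a slightly stronger form than stated.
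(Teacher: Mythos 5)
Your proof is correct, and it takes a genuinely different route from the paper's. The paper argues asymptotically via the Gelfand formula: it writes $(T^n)^\psi$ as the sum of rank-one matrices $\sum_{|w|=n}\vecc(X_2^w)\vecc(X_1^w)^*$, bounds its norm by a product of suprema of $\|\sum_{|w|=n}a_wX_j^w\|$ over unit $\ell^2$ coefficient sequences, and invokes the ``outer spectral radius'' machinery of \cite{pascoe-2019-QPF} to conclude that $r(T)$ is at most the geometric mean of the outer spectral radii of $X_1$ and $X_2$, the first being $<1$ (strict row contraction) and the second $\leq 1$ (row co-isometry). You instead identify $T$, via $\vecc$, with the map $\Phi(V)=\sum_i (X_2)_i V (X_1)_i^*$ and bound $\|\Phi\|$ in a single step by $c=\|\sum_i (X_1)_i(X_1)_i^*\|^{1/2}<1$ through the row--column factorization $\Phi(V)=\mathbf{B}\mathbf{W}$, using only $\mathbf{B}\mathbf{B}^*=I$ (and indeed $\mathbf{B}\mathbf{B}^*\preceq I$ would suffice). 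This is more elementary --- no powers of $T$, no external citation --- and it proves something strictly stronger: $\Phi$ is a strict contraction in the operator norm on $\cM_{n_2\times n_1}$, not merely an operator with spectral radius below $1$. Your observation that irreducibility of $X_2$ is never used is accurate and is equally true of the paper's own argument; and you are right that the one convention that must be invoked is the paper's definition of ``row contraction'' as \emph{strict}, which is what makes $c<1$.
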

\begin{proof}
	Note $T^n = \sum_{|w|=n} \overline{X_1}^w\otimes X_2^w.$
	Therefore, $(T^n)^\psi =  \sum_{|w|=n} \vecc X_1^w\otimes (\vecc X_2^w)^*.$
	So, $$\|(T^n)^\psi\|\leq \|\sup_{\sum_{|w|=n} |a_w|^2=1} a_wX_1^w \|\|\sup_{\sum_{|w|=n} |a_w|^2=1} a_wX_2^w \|.$$
	So, by the Gelfand formula for outer spectral radius, we see that the spectral radius of $T$ \cite{pascoe-2019-QPF} is less than the geometric mean of the outer spectral radii
	of $X_1$ and $X_2.$
\end{proof}

We now show that the condition number of a direct sum of some tuple with a scaled co-isometric tuple has the same condition number as the original in the limit.
\begin{lemma}
	If $X_1$ is a row contraction and $X_2$ is an irreducible row co-isometry, then $$\lim_{t\rightarrow 1} \kappa(X_1\oplus tX_2) = \kappa(X_1).$$
\end{lemma}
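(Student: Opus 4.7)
The plan is to perform an asymptotic block decomposition of the Pick matrix $P_{X_1\oplus tX_2}$, in the spirit of Theorem~\ref{thm:ANPMainTheorem}, with the preceding lemma controlling the ``cross'' components.

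First, write $P_{X_1\oplus tX_2}=[(I-T)^{-1}]^{\psi}$ with $T=\sum_k\overline{(X_1\oplus tX_2)_k}\otimes(X_1\oplus tX_2)_k$ and decompose $\C^{n_1+n_2}\otimes\C^{n_1+n_2}=\bigoplus_{i,j\in\{1,2\}}V_{ij}$ with $V_{ij}:=\C^{n_i}\otimes\C^{n_j}$. Then $T$ is block diagonal with blocks $T_{11}$, $tT_{12}$, $tT_{21}$, $t^{2}T_{22}$, where $T_{ij}=\sum_k\overline{(X_i)_k}\otimes(X_j)_k$. The strict contractivity of $X_1$ forces the spectral radius of $T_{11}$ to be less than $1$; the preceding lemma (and its symmetric counterpart, proved by the same Gelfand estimate with the roles of $X_1,X_2$ exchanged) forces the spectral radii of $T_{12}$ and $T_{21}$ to be less than $1$; and the quantum Perron--Frobenius argument from the proof of Theorem~\ref{thm:ANPMainTheorem} gives $(I-t^{2}T_{22})^{-1}=\frac{G}{1-t^{2}}+O(1)$ for a rank-one projection $G$. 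A direct check on matrix units shows that $\psi$ carries $\mathrm{End}(V_{ij})$ into $\mathrm{Hom}(V_{ii},V_{jj})$; hence $P_{X_1\oplus tX_2}$ is supported on $V_{11}\oplus V_{22}$ (matching $\ran P_X=\vecc(\alg_{X_1\oplus tX_2})$) and assumes the block form
\[
P_{X_1\oplus tX_2}=\begin{pmatrix}P_{X_1}&E_t\\F_t&P_{tX_2}\end{pmatrix},
\]
where $E_t,F_t$ remain uniformly bounded in $t$ while $\frac{1-t^{2}}{t^{2}}P_{tX_2}\to\overline{W}\otimes I_{n_2}$.

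Next, because the lower-right block blows up at rate $(1-t^{2})^{-1}$ while $E_t,F_t$ stay bounded, a standard block perturbation argument for positive square roots and Moore--Penrose pseudoinverses yields $P_{X_1\oplus tX_2}^{1/2}=P_{X_1}^{1/2}\oplus P_{tX_2}^{1/2}+o(1)$ and $\finv{P_{X_1\oplus tX_2}}=\finv{P_{X_1}}\oplus\finv{P_{tX_2}}+o(1)$ as $t\nearrow 1$. Plugging this into the formula $\|Y\|_{\NP{X_1\oplus tX_2}}=\|{}^{P}Y^{P}\|$ from Corollary~\ref{cor:NP norm formula} for $Y=Y_1\oplus Y_2\in\cM_{s\times t}\otimes\alg_{X_1\oplus tX_2}$, and reusing the normalization step from the proof of Theorem~\ref{thm:ANPMainTheorem} on the $X_2$-block, one obtains
\[
\lim_{t\to 1}\|Y\|_{\NP{X_1\oplus tX_2}}=\max\bigl(\|Y_1\|_{\NP{X_1}},\,\|Y_2\|\bigr).
\]

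Dividing by $\|Y\|=\max(\|Y_1\|,\|Y_2\|)$ and using $\|Y_1\|_{\NP{X_1}}\le\kappa(X_1)\|Y_1\|$ together with $\kappa(X_1)\ge1$, the upper bound $\limsup_{t\to1}\kappa(X_1\oplus tX_2)\le\kappa(X_1)$ follows. For the matching lower bound, I would pick a near-extremal $Y_1$ for $\kappa(X_1)$ and a compatible $Y_2$ with $\|Y_2\|\le\|Y_1\|$; such a $Y_2$ exists because $\{q(tX_2):q(X_1)=0\}$ is a two-sided ideal of the simple algebra $\alg_{tX_2}=M_{n_2}$ and hence equals either $M_{n_2}$ (so one may take $Y_2=0$) or $\{0\}$ (in which case $\alg_{X_1\oplus tX_2}\cong\alg_{X_1}$ and the same bound follows directly). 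The chief technical obstacle is the block perturbation of the previous step: verifying that the bounded off-diagonal blocks $E_t,F_t$ do not obstruct the clean asymptotic decoupling of $P_{X_1\oplus tX_2}^{1/2}$ and $\finv{P_{X_1\oplus tX_2}}$ once the $(1-t^{2})^{-1}$ blow-up of the lower-right block is accounted for.
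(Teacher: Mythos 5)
Your strategy closely parallels the paper's — block-decomposing $P_{X_1\oplus tX_2}$ along the $V_{ij}$ subspaces, controlling the cross blocks with the preceding spectral radius lemma, and invoking quantum Perron--Frobenius on the $X_2$-block — and two of your observations are genuine additions: that $\psi$ carries $\mathrm{End}(V_{ij})$ into $\mathrm{Hom}(V_{ii},V_{jj})$ (which cleanly explains the paper's ``remove the zero rows and columns'' step), and the case analysis of the ideal $\{q(tX_2):q(X_1)=0\}$. The gap is at the normalization step. You work directly with the raw Pick matrix and assert $P_{X_1\oplus tX_2}^{1/2}=P_{X_1}^{1/2}\oplus P_{tX_2}^{1/2}+o(1)$ and $\finv{P_{X_1\oplus tX_2}}=\finv{P_{X_1}}\oplus\finv{P_{tX_2}}+o(1)$ by a ``standard'' block perturbation argument. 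Those $o(1)$ estimates are not strong enough: in $\finv{P}(I\otimes Y)P^{1/2}$ the off-diagonal block of $\finv{P}$ gets multiplied by the $(2,2)$-block of $P^{1/2}$, which diverges like $(1-t^2)^{-1/2}$, so one actually needs the off-diagonal of $\finv{P}$ to decay at rate $O(1-t^2)$ before the cross terms can be dismissed. That sharper rate does hold (one can verify it in the $2\times 2$ scalar analog), but it is not a standard perturbation fact, particularly since $P_{X_1}$ may be singular and $\finv{\cdot}$ is genuinely a Moore--Penrose pseudoinverse.

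The paper sidesteps this entirely with a preconditioner: it takes $D\in\{I\otimes X\}'^\times$ block diagonal with entries $1$ and $\sqrt{n(1-t^2)}$, so that $DP_{X_1\oplus tX_2}D$ converges to a bounded positive semidefinite matrix of constant rank, whose square root and pseudoinverse converge by ordinary continuity; Corollary~\ref{cor:NP norm formula intro} then guarantees the $\NP{X}$ norm formula is unchanged under such preconditioning, and the $\max$ formula drops out with no rate analysis at all. Your proof needs either this preconditioner or an explicit $O(1-t^2)$ estimate on the cross block of $\finv{P}$ to close; as written, the appeal to a standard perturbation argument is where the argument is incomplete.
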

\begin{proof}
	The reader may verify that $P_{X_1\oplus tX_2}$ has a block $4$ by $4$ structure with four non-zero block entries, let $\hat{P}_{X_1\oplus tX_2}$ be the matrix with the
	zero columns and rows removed.
	Note,
		$$\hat{P}_{X_1\oplus tX_2} =
		\bbm  [(I-\sum \overline{(X_1)_i}\sot (X_1)_i)^{-1}]^\psi & [(I-t\sum \overline{(X_1)_i}\sot (X_2)_i)^{-1}]^\psi  \\
			 [(I-t\sum \overline{(X_2)_i}\sot (X_1)_i)^{-1}]^\psi & [(I-t^2\sum \overline{(X_2)_i}\sot (X_2)_i)^{-1}]^\psi   \ebm$$
	Preconditioning by a block diagonal $D$ with $1$ and $\sqrt{n(1-t^2)}$ on the diagonal, we get that 
		$$\tilde{P}_{X_1\oplus tX_2} =
		\bbm  [(I-\sum \overline{(X_1)_i}\sot (X_1)_i)^{-1}]^\psi & \sqrt{n(1-t^2)}[(I-t\sum \overline{(X_1)_i}\sot (X_2)_i)^{-1}]^\psi  \\
			 \sqrt{n(1-t^2)}[(I-t\sum \overline{(X_2)_i}\sot (X_1)_i)^{-1}]^\psi & {n(1-t^2)}[(I-t^2\sum \overline{(X_2)_i}\sot (X_2)_i)^{-1}]^\psi   \ebm$$
	Therefore, taking $t\rightarrow 1$	
		$$\lim_{t\rightarrow 1} \tilde{P}_{X_1\oplus tX_2} =
		\bbm  P_{X_1} & 0  \\
			 0 & I  \ebm$$
	Therefore, applying Corollary \ref{cor:NP norm formula intro}, $$\lim_{t\rightarrow 1} \kappa(X_1\oplus tX_2) = \kappa(X_1).$$
\end{proof}

We now immediately see the following theorem.
\begin{theorem}
	Given $(\rho_i)^\infty_{i=1},$ a sequence of numbers in $[0,1)$, and $(n_i)^\infty_{i=1},$ a sequence of natural numbers,
	there is an sequence $(X^{(i)})^\infty_{i=1}$
	such that each $X^{(i)}$ has size $n_i$ and for any sequence $(Y^{(i)})^\infty_{i=1}$ such that $\|Y^{(i)}\|\leq \rho_i,$ there is a
	function in $H^\infty$ of norm $1$ such that $f(X^{(i)})=Y^{(i)}.$ 
\end{theorem}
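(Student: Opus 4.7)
The plan is to take the interpolating nodes $X^{(i)}$ to be small scalar multiples of irreducible row co-isometries, chosen inductively so that the condition numbers of all truncated direct sums stay uniformly bounded, and then to produce the single interpolating function by a weak-$*$ compactness argument applied to the finite NP problems.

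For each $i$ I would pick, using Section~\ref{group_examples_section}, an irreducible row co-isometry $Z^{(i)}\in\cM_{n_i}^d$, and set $X^{(i)}=t_iZ^{(i)}$ with $t_i\in(0,1)$ to be determined. Let $\sigma^{*}=\sup_i\rho_i$; assuming for simplicity $\sigma^{*}<1$ (the general case is handled by the same inductive scheme with a slightly more careful bound chosen at each step), fix a constant $C\in(1,1/\sigma^{*})$. Writing $X_N:=X^{(1)}\oplus\cdots\oplus X^{(N)}$, I will force $\kappa(X_N)\le C$ for every $N$. For the base case $N=1$, Theorem~\ref{ANPIntro} gives $\kappa(t_1Z^{(1)})\to1$ as $t_1\to1$, so some $t_1$ close to $1$ yields $\kappa(X^{(1)})<C$. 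For the inductive step, the preceding lemma of this subsection delivers
\[
\lim_{t\to1}\kappa\bigl(X_N\oplus tZ^{(N+1)}\bigr)=\kappa(X_N)<C,
\]
so choosing $t_{N+1}$ sufficiently close to $1$ yields $\kappa(X_{N+1})<C$.

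With the sequence $(X^{(i)})$ in hand, take any admissible data $(Y^{(i)})$ with $\|Y^{(i)}\|\le\rho_i$, and write $Y_N=Y^{(1)}\oplus\cdots\oplus Y^{(N)}$. Then $\|Y_N\|=\max_{i\le N}\|Y^{(i)}\|\le\sigma^{*}$, and the definition of the condition number gives
\[
\|Y_N\|_{\NP{X_N}}\le\kappa(X_N)\,\|Y_N\|\le C\sigma^{*}<1.
\]
By Theorem~\ref{thm:effective NP} there exists $f_N\in H^\infty(\B^d)$ with $\|f_N\|_\infty\le1$ and $f_N(X_N)=Y_N$; since nc functions respect direct sums, this means $f_N(X^{(i)})=Y^{(i)}$ for every $i\le N$.

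To extract a single interpolant I would use that the unit ball of $H^\infty(\B^d)\cong\mathcal L_d$ is weak-$*$ compact (the algebra $\mathcal L_d$ being the dual of the noncommutative $H^1$ on the full Fock space), and take a weak-$*$ cluster point $f$ of the sequence $(f_N)$. Point evaluation at any fixed interior point $X^{(i)}\in\B^d$ is weak-$*$ continuous, so $\|f\|_\infty\le1$ and $f(X^{(i)})=Y^{(i)}$ for every $i$, as desired. The main obstacle is maintaining the uniform condition-number bound $\kappa(X_N)\le C$ as $N$ grows; this is precisely what the preceding lemma in this subsection makes possible, and it in turn relies on the existence of an irreducible row co-isometry of each given size $n_i$, supplied by Section~\ref{group_examples_section}.
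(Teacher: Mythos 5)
Your construction of the nodes $X^{(i)}=t_iZ^{(i)}$ from irreducible co-isometries and the weak-$*$ compactness extraction are the right ideas (the paper gives no explicit proof, and your weak-$*$ step supplies a detail it leaves implicit), and the argument is correct under your simplifying assumption $\sigma^{*}:=\sup_i\rho_i<1$. But the parenthetical that the case $\sigma^{*}=1$ is ``handled by the same inductive scheme with a slightly more careful bound'' passes over a genuine obstruction: no uniform bound $\kappa(X_N)\le C$ can be maintained once $\sigma^{*}=1$ and $n_1>1$. Once the $X^{(i)}$ are pairwise non-similar irreducibles, $\alg_{X_N}$ is the full block-diagonal algebra, so $Y^{(1)}\oplus 0\oplus\cdots\oplus 0$ is an admissible test datum; since imposing more interpolation constraints can only raise the minimal norm, $\kappa(X_N)\ge\kappa(X^{(1)})$ for every $N$. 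And $\kappa(X^{(1)})=\kappa(t_1Z^{(1)})>1$ strictly whenever $t_1<1$ and $n_1>1$ (one can check this, e.g., by specializing to a two-point scalar Pick problem). So after $t_1$ is fixed, the estimate $\snorm{Y_N}_{\NP{X_N}}\le\kappa(X_N)\max_{i\le N}\rho_i$ exceeds $1$ as soon as $\max_{i\le N}\rho_i>1/\kappa(X^{(1)})$, regardless of how $t_2,t_3,\dots$ are chosen.

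The theorem is still correct, but the induction must track the NP-norm of the actual data rather than the worst-case condition number. The proof of the lemma immediately preceding the theorem shows more than its statement: from the preconditioned limit $\tilde{P}_{X_1\oplus tX_2}\to\diag(P_{X_1},I)$ together with Corollary~\ref{cor:NP norm formula intro}, one obtains for each fixed block-diagonal $Y_1\oplus Y_2$ the decoupling
\[
\lim_{t\to1}\snorm{Y_1\oplus Y_2}_{\NP{X_1\oplus tX_2}} = \max\bigl(\snorm{Y_1}_{\NP{X_1}},\ \snorm{Y_2}\bigr).
\]
If you carry $\gamma_N := \sup\{\snorm{Y_N}_{\NP{X_N}} : \snorm{Y^{(i)}}\le\rho_i,\ i\le N\}$ through the induction in place of $\kappa(X_N)$, then $\gamma_1\to\rho_1<1$ as $t_1\to1$, and the decoupling gives $\gamma_{N+1}\to\max(\gamma_N,\rho_{N+1})<1$ as $t_{N+1}\to1$, so each $t_{N+1}$ can be chosen to keep $\gamma_{N+1}<1$ even when $\rho_i\to1$. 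This is precisely the sense in which the nodes must be ``chosen based only on the sequence of norms of the target data, and their sizes,'' as the paper's discussion anticipates; your weak-$*$ extraction then runs exactly as you wrote it.
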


%
%
%

\section{Numerics and random examples}
\label{sec:numerics}
In this section we present a pseudocode version of what was used to initially find counter-examples to the column-row property for the Fock space.

\subsection{Code} The following pseudocode gives an algorithm that attempts to randomly generate tuples of matrices $X = (X_1,X_2)$ and
$Y = (Y_1,\dots, Y_m)$ that satisfy the argument in Theorem~\ref{thm:row-col fails for Fock}.
Much like the argument in Theorem~\ref{thm:row-col fails for Fock}, the algorithm presented relies on Corollary~\ref{cor:NP norm formula} and 
Theorem~\ref{thm:ANPMainTheorem}.

Recall that given a row contraction $X = (X_1,\dots, X_d)\in \B^d\subset \cM_n^d$ we can solve the interpolation to a block matrix $Y\in 
M_{s\times t}\otimes \cM_n$ if and only if $Y\in \cM_{s\times t}\otimes \alg_X$.
Thus our numeric approach to Theorem~\ref{thm:row-col fails for Fock} certainly requires at least that $X = (X_1, X_2)$ is a row contraction 
and $Y\in \cM_{1\times m}\otimes \alg_X$.
Recall that in this case Corollary~\ref{cor:NP norm formula} implies
\[
	\norm{{}^PY^P} = \norm{Y}_{\NP{X}}.
\]
Thus, we choose $Y_1,\dots, Y_m\in \alg_X$ and set
\[
	Y_\rowt = \bBm Y_1 & \dots & Y_m \eBm \quad \text{ and } \quad Y_\colt = \bBm Y_1 \\ \vdots \\ Y_m \eBm.
\]
The goal is to find choices of $Y_1,\dots, Y_m$ such that
\[
	\sqrt{m} \approx \frac{ \norm{{}^P(Y_\rowt)^P} }{ \norm{{}^P(Y_\colt)^P} } = \frac{ \norm{Y_\rowt}_{\NP{X}} }{ \norm{Y_\colt}_{\NP{X}} }.
\]

As was seen in Theorem~\ref{thm:row-col fails for Fock}, since the $\NP{X}$ norm is an infimum, there must be an interpolating function 
$F_\colt\in M_{m\times 1}(H^\infty(\B^d))$ such that $F_\colt(X) = Y_\colt$ and $\norm{F_\colt}_\infty \approx \norm{Y_\colt}_{\NP{X}}$.
Choosing $F_\rowt$ to be the row vector version of $F_\colt$, we have that $F_\rowt(X) = Y_\rowt$.
Since $\norm{Y_\rowt}_{\NP{X}}\leq \norm{F_\rowt}_\infty$ and $\norm{Y_\colt}_{\NP{X}} \approx \norm{F_\colt}_\infty$ we have the following
\[
	\frac{ \norm{Y_\rowt}_{\NP{X}} }{ \norm{Y_\colt}_{\NP{X}} } 
		\approx \frac{ \norm{Y_\rowt}_{\NP{X}} }{ \norm{F_\colt}_{\infty} }
		\leq \frac{ \norm{F_\rowt}_{\infty} }{ \norm{F_\colt}_{\infty} }
		\leq C_m \leq \sqrt{m}.
\]
Thus, with a correct choice of $Y$, we have that $\sqrt{m} \lesssim C_n \leq \sqrt{m}$.

Now, fix $n$ and $m$ and choose a cut-off value $\gamma <\sqrt{m}$.
The following pseudo-code describes a loop to find $X = (X_1, X_2)\in \cM_n^2$ and $Y_1,\dots, Y_m\in \cM_n$ that witness the ratio
$\snorm{Y_\rowt}_{\NP{X}}  > \gamma \snorm{Y_\colt}_{\NP{X}}$.
\begin{enumerate}[\bf 1: ]
	\item Set a cut off value $\gamma < \sqrt{m}$;
	\item Set the maximum ratio $M_r = 0$;
	\item Choose a sufficiently small $\ep>0$;
	\item {LOOP} while $M_r < \gamma$;
	\item Randomly generate $Z = (Z_1,Z_2)\in \cM_n^2$ such that $Z_1 Z_1^* + Z_2Z_2^*$ is invertible;
	\item Set $X = (1-\ep)(Z_1 Z_1^* + Z_2Z_2^*)^{-1/2}Z$;
	\item Compute $P_X = \big[(I_{n^2} - \overline{X_1}\otimes X_1 - \overline{X_2}\otimes X_2)^{-1} \big]^\psi$;
	\item Compute $P_X^{1/2}$ and $P_X^{\dagger/2}$;
	\item Select $v_1,\dots, v_m$ to be distinct eigenvectors of $P_X$ with the smallest positive associated eigenvalues;
	\item Set each $Y_i = \vecc^{-1}(v_i)$;
	\item Form $Y = (Y_1, \dots, Y_m)$;
	\item Compute $\snorm{{}^P(Y_\rowt)^P}$ and $\snorm{{}^P(Y_\colt)^P}$;
	\item IF $\gamma \snorm{{}^P(Y_\colt)^P} > \snorm{{}^P(Y_\rowt)^P}$;
	\item THEN set $M_r = \snorm{{}^P(Y_\rowt)^P}/\snorm{{}^P(Y_\colt)^P}$ and PRINT($X$, $Y$, $M_r$);
	\item ELSE set $M_r = \max\{M_r, \snorm{{}^P(Y_\rowt)^P}/\snorm{{}^P(Y_\colt)^P}\}$;
	\item END LOOP.
\end{enumerate}

The nature of the above algorithm implies that if $\ep$ is not sufficiently close to $0$, then typically the column-row ratio will not be 
close to $\sqrt{m}$ and the loop will never terminate.
It is perhaps advisable to randomly generate $\ep'\in (0,\ep)$ at each iteration of the loop to give a better chance that the loop terminates.
A functioning version of the above pseudo-code (including a method of computing $P_X$) can be found \href{https://urldefense.proofpoint.com/v2/url?u=https-3A__github.com_mericaugat_EffectiveNP-257D-257Bhere&d=DwIGAg&c=sJ6xIWYx-zLMB3EPkvcnVg&r=Ow9187WI0-zGf1nppcGtig&m=zjkqh4CWAlxo_VxIU1Xf76kxCEAnNbCvBddQMQ3AajM&s=Kp2g8aQ-ABGFOp5fXOUsq-7mjSvC80IOZxuHlwO86ZA&e= }.

\subsection{Committee spaces} 
We note that if we had not done the normalization to make $X$ asymptotically unitary in the above code and instead chose random tuples with independent entries, in the limit
we would not find examples with large column-row ratio, as was proven in \cite{pascoe-2019-committee}. That is, sequences of random multipliers usually satisfy the true column-row property. Originally, our group did not normalize this way, and only found examples with a ratio of about $1.0043$ after millions of trials.

\bibliographystyle{alpha}
\bibliography{EffectiveNP}

\end{document}